\begin{document}
%%%%%%%%%%%%%%%%%%%%%%%%%%%%%%%%%%%%%%%%%%%%%%%%%%%%%%%%%%%%%%%%%%%%%%
%	spaces for your own definitions follows
%%%%%%%%%%%%%%%%%%%%%%%%%%%%%%%%%%%%%%%%%%%%%%%%%%%%%%%%%%%%%%%%%%%%%%
\newtheorem{theo}{Theorem}[section]
\newtheorem{atheo}{Theorem*}
\newtheorem{prop}[theo]{Proposition}
\newtheorem{aprop}[atheo]{Proposition*}
\newtheorem{lemma}[theo]{Lemma}
\newtheorem{alemma}[atheo]{Lemma*}
\newtheorem{exam}[theo]{Example}
\newtheorem{coro}[theo]{Corollary}
\theoremstyle{definition}
\newtheorem{defi}[theo]{Definition}
\newtheorem{rem}[theo]{Remark}

%\renewcommand{\theequation}{\mbox{\arabic{section}.\arabic{equation}}}

%letters - added these
\newcommand{\Bb}{{\bf B}}
\newcommand{\Cb}{{\mathbb C}}
\newcommand{\Nb}{{\mathbb N}}
\newcommand{\Qb}{{\mathbb Q}}
\newcommand{\Rb}{{\mathbb R}}
\newcommand{\Zb}{{\mathbb Z}}
\newcommand{\Ac}{{\mathcal A}}
\newcommand{\Bc}{{\mathcal B}}
\newcommand{\Cc}{{\mathcal C}}
\newcommand{\Dc}{{\mathcal D}}
\newcommand{\Fc}{{\mathcal F}}
\newcommand{\Ic}{{\mathcal I}}
\newcommand{\Jc}{{\mathcal J}}
\newcommand{\Kc}{{\mathcal K}}
\newcommand{\Lc}{{\mathcal L}}
\newcommand{\Oc}{{\mathcal O}}
\newcommand{\Pc}{{\mathcal P}}
\newcommand{\Sc}{{\mathcal S}}
\newcommand{\Tc}{{\mathcal T}}
\newcommand{\Uc}{{\mathcal U}}
\newcommand{\Vc}{{\mathcal V}}

\author{Charles Akemann and Nik Weaver}

\title [A Lyapunov type theorem]
       {A Lyapunov type theorem from Kadison-Singer}

\address {Department of Mathematics\\
UC Santa Barbara\\
Santa Barbara, CA 93106\\
Department of Mathematics\\
Washington University\\
Saint Louis, MO 63130}

\email {akemann@math.ucsb.edu, nweaver@math.wustl.edu}

\date{\em August 12, 2013}

\subjclass[2000]{Primary 05A99, 11K38, 46L05.}
\thanks{Second author supported by NSF grant DMS-1067726.}

%%%%%%%%%%%%%%%%%%%%%%%%%%%%%%%%%%%%%%%%%%%%%%%%%%%%%%%%%%%%%%%%%%%%%%%
%	Please insert the article body now
%%%%%%%%%%%%%%%%%%%%%%%%%%%%%%%%%%%%%%%%%%%%%%%%%%%%%%%%%%%%%%%%%%%%%%%

\begin{abstract}
Marcus, Spielman, and Srivastava \cite{MSS} recently solved the Kadison-Singer
problem by showing that if $u_1, \ldots, u_m$ are column vectors in $\Cb^d$
such that $\sum u_iu_i^* = I$, then a set of indices $S \subseteq \{1, \ldots,
m\}$ can be chosen so that $\sum_{i \in S} u_iu_i^*$ is approximately
$\frac{1}{2}I$, with the approximation good in operator norm to order
$\epsilon^{1/2}$ where $\epsilon = \max \|u_i\|^2$. We extend their result
to show that every linear combination of the matrices $u_iu_i^*$ with
coefficients in $[0,1]$ can be approximated in operator norm to
order $\epsilon^{1/8}$ by a matrix of the form $\sum_{i \in S} u_iu_i^*$.
\end{abstract}

\maketitle

Marcus, Spielman, and Srivastava recently proved the following theorem
(\cite{MSS}, Corollary 1.3):
\begin{theo}\label{mssthm}
Let $u_1, \ldots, u_m$ be column vectors in $\Cb^d$ such that
$\sum_i u_iu_i^* = I$ and $\|u_i\|^2 \leq \epsilon$ for all $i$.
Then there exists a partition of $\{1, \ldots, m\}$ into sets
$S_1$ and $S_2$ so that for $j \in \{1,2\}$,
$$\left\|\sum_{i \in S_j} u_iu_i^*\right\| \leq
\frac{1}{2} + \sqrt{2\epsilon} + \epsilon.$$
\end{theo}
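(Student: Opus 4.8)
The plan is to prove this by the interlacing-families method of \cite{MSS}: reduce the two-sided partition bound to a one-sided spectral bound for a single random model, realize that bound through an expected characteristic polynomial, and control the largest root of that polynomial by a multivariate barrier-function estimate. First I would pass to a random model in $\Cb^{2d} = \Cb^d \oplus \Cb^d$. Let $v_1, \dots, v_m$ be independent random vectors with $v_i$ equal to $\sqrt{2}\,u_i \oplus 0$ and to $0 \oplus \sqrt{2}\,u_i$, each with probability $\tfrac12$. Then every $\mathbb{E}[v_iv_i^*]$ is the $2d \times 2d$ block-diagonal matrix with two copies of $u_iu_i^*$ on the diagonal, so $\sum_i \mathbb{E}[v_iv_i^*] = I_{2d}$ and $\operatorname{tr}\mathbb{E}[v_iv_i^*] = 2\|u_i\|^2 \le 2\epsilon$. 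For any outcome, let $S_1$ be the set of $i$ with $v_i \in \Cb^d \oplus 0$ and $S_2$ its complement; then $\sum_i v_iv_i^*$ is block-diagonal with blocks $2\sum_{i \in S_1}u_iu_i^*$ and $2\sum_{i \in S_2}u_iu_i^*$, so its operator norm is $2\max_{j}\bigl\|\sum_{i \in S_j}u_iu_i^*\bigr\|$. Since $\tfrac12(1 + \sqrt{2\epsilon})^2 = \tfrac12 + \sqrt{2\epsilon} + \epsilon$, it therefore suffices to prove that with positive probability $\bigl\|\sum_i v_iv_i^*\bigr\| \le (1 + \sqrt{2\epsilon})^2$. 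The $v_i$ are finitely supported, so no further approximation is needed.

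\emph{Interlacing family.} Write $w_{i,1} = \sqrt{2}\,u_i \oplus 0$ and $w_{i,2} = 0 \oplus \sqrt{2}\,u_i$. For a full assignment $\sigma = (j_1, \dots, j_m) \in \{1,2\}^m$ put $q_\sigma(x) = \det\bigl(xI - \sum_i w_{i,j_i}w_{i,j_i}^*\bigr)$, and for a partial assignment put $q_{j_1,\dots,j_s} = \mathbb{E}\,q_{j_1,\dots,j_s,\,v_{s+1},\dots,v_m}$, with $q_\emptyset = \bar q = \mathbb{E}\,q_{v_1,\dots,v_m}$. I would show that $\{q_\sigma\}$ is an interlacing family: each $q_\sigma$ is real-rooted (it is the characteristic polynomial of a positive matrix), and for every partial assignment the polynomials $q_{j_1,\dots,j_s,1}$ and $q_{j_1,\dots,j_s,2}$, which are monic of the same degree, have a common interlacing. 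For the latter it suffices, by the standard criterion, that every convex combination $\alpha q_{j_1,\dots,j_s,1} + (1-\alpha)q_{j_1,\dots,j_s,2}$ with $\alpha \in [0,1]$ be real-rooted; and this combination is the expected polynomial of the model in which $v_{s+1}$ is modified to take $w_{s+1,1}$ with probability $\alpha$, which, by the rank-one determinant identity $\det(xI - A - vv^*) = (1 - \partial_t)\det(xI - A + t\,vv^*)\big|_{t=0}$ together with independence, is obtained from the real stable polynomial $\det\bigl(xI - \sum_{i \le s}w_{i,j_i}w_{i,j_i}^* + \sum_{i > s}z_iB_i\bigr)$ (with $B_i$ positive semidefinite) by applying the stability-preserving operators $1 - \partial_{z_i}$ and setting all $z_i = 0$, hence is real-rooted. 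Granting this, the averaging lemma — if $f_1,f_2$ have a common interlacing and $f$ is a convex combination of them, then some $f_k$ satisfies $\lambda_{\max}(f_k) \le \lambda_{\max}(f)$ — applied $m$ times, refining one coordinate at a time from $\bar q$, produces an assignment $\sigma$ with $\lambda_{\max}(q_\sigma) \le \lambda_{\max}(\bar q)$. Since $q_\sigma(x) = \det(xI - \sum_i v_iv_i^*)$ for that outcome, that outcome has positive probability and satisfies $\bigl\|\sum_i v_iv_i^*\bigr\| \le \lambda_{\max}(\bar q)$.

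\emph{Root bound for the expected polynomial.} It remains to prove $\lambda_{\max}(\bar q) \le (1 + \sqrt{2\epsilon})^2$. Iterating the rank-one identity gives
$$\bar q(x) = \Bigl(\prod_{i=1}^m (1 - \partial_{z_i})\Bigr)\det\Bigl(xI + \sum_{i=1}^m z_i A_i\Bigr)\Big|_{z_1 = \dots = z_m = 0}, \qquad A_i = \mathbb{E}[v_iv_i^*],$$
which uses only that $A \mapsto \partial_z\det(M + zA)\big|_{z=0} = \operatorname{tr}(\operatorname{adj}(M)A)$ is linear in $A$. The polynomial $Q(x, \vec z) = \det(xI + \sum_i z_i A_i)$ is real stable since the $A_i$ are positive semidefinite, and at $z_1 = \dots = z_m = -\delta$ one has $Q(x, -\delta\vec 1) = (x - \delta)^d$ (because $\sum_i A_i = I$), so for $x_0 > \delta$ the point $(x_0, -\delta\vec 1)$ lies strictly above all roots of $Q$ and $\Phi_Q^{z_i}(x_0, -\delta\vec 1) = \operatorname{tr}(A_i)/(x_0 - \delta) \le 2\epsilon/(x_0 - \delta)$. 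The engine is the barrier-shift lemma of \cite{MSS}: if $p$ is real stable, $\vec z$ lies above its roots, $\Phi_p^{z_i}(\vec z) < 1$, and $\tfrac1\delta + \Phi_p^{z_i}(\vec z) \le 1$, then $(1 - \partial_{z_i})p$ is real stable, $\vec z + \delta e_i$ lies above its roots, and $\Phi_{(1-\partial_{z_i})p}^{z_j}(\vec z + \delta e_i) \le \Phi_p^{z_j}(\vec z)$ for every coordinate $j$. Choosing $\delta = 1 + \sqrt{2\epsilon}$ and $x_0 = (1 + \sqrt{2\epsilon})^2$, so that $\tfrac1\delta + \tfrac{2\epsilon}{x_0 - \delta} = 1$ (the choice minimizing $x_0$), I would apply the lemma once per index: the barriers never increase and start at most at $\sqrt{2\epsilon}/(1 + \sqrt{2\epsilon}) < 1$, so the hypotheses persist, and each step moves one $z_i$ from $-\delta$ up to $0$. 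After all $m$ steps we are at $(x_0, \vec 0)$, where the polynomial is $\bar q$, real-rooted in $x$ with $x_0$ above all its roots; hence $\lambda_{\max}(\bar q) \le x_0 = (1 + \sqrt{2\epsilon})^2$, which completes the proof.

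\emph{Main obstacle.} The two genuinely substantial inputs are both facts about real stability: that $1 - \partial_{z_i}$ and restriction to a coordinate subspace preserve real stability (this underlies both the common-interlacing property and the applicability of the barrier method), and the quantitative barrier-shift lemma, whose proof is a careful univariate monotonicity analysis of how $\Phi$ changes under $1 - \partial_{z_i}$ followed by an upward shift. The remaining steps — the block reduction, the interlacing-family bookkeeping, the mixed-characteristic-polynomial formula, and the optimization over $\delta$ — are routine once those are available.
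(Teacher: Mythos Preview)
The paper does not give its own proof of this theorem: it is stated as the Marcus--Spielman--Srivastava result and attributed to \cite{MSS}, Corollary~1.3, with no argument supplied; everything else in the paper takes it as a black box (and Lemma~\ref{lem1} repeats the same reduction to \cite{MSS}, Theorem~1.2). Your outline is a faithful and essentially correct sketch of the proof in \cite{MSS} itself --- the $\Cb^{2d}$ block reduction to Theorem~1.2, the interlacing-families argument, the mixed-characteristic-polynomial identity, and the multivariate barrier bound with the optimal choice $\delta = 1+\sqrt{2\epsilon}$, $x_0 = (1+\sqrt{2\epsilon})^2$ --- so there is nothing to compare: you have reproduced the cited proof rather than diverged from it. (One cosmetic slip: since you work in $\Cb^{2d}$, $Q(x,-\delta\vec 1) = (x-\delta)^{2d}$, not $(x-\delta)^d$; this does not affect anything.)
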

\noindent Here $\|u_i\|$ is euclidean norm in $\Cb^d$ and
$\|\sum_{i \in S_j} u_iu_i^*\|$ is operator norm in $M_d(\Cb)$.
Thanks to previous reductions due to Akemann-Anderson \cite{AA} and
Weaver \cite{W}, this stunning result implies a positive solution to
the celebrated Kadison-Singer problem (\cite{KS}; see \cite{CC} for
background). A more elementary approach to the Akemann-Anderson
portion of the reduction can be found in \cite{B}.

If $u$ is a vector in $\Cb^d$ then $uu^*$ is a positive rank one matrix.
It is the orthogonal projection onto the one-dimensional subspace of $\Cb^d$
spanned by $u$, scaled by the factor $\|u\|^2$. So if $\epsilon$ is small,
then the hypothesis $\sum u_iu_i^* = I$ means that we have a large number
of small multiples of rank one projections which sum up to the identity
matrix. The conclusion that $\|\sum_{i \in S_j} u_iu_i^*\| \leq
\frac{1}{2} + O(\sqrt{\epsilon})$ for $j \in \{1,2\}$ means that the positive matrix
$A = \sum_{i \in S_1} u_iu_i^*$ satisfies
$\|A\| \leq \frac{1}{2} + O(\sqrt{\epsilon})$ and
$\|I - A\| \leq \frac{1}{2} + O(\sqrt{\epsilon})$, i.e.,
$$\left(\frac{1}{2} - O(\sqrt{\epsilon})\right)I \leq A \leq
\left(\frac{1}{2} + O(\sqrt{\epsilon})\right)I.$$
We are using the order relation on matrices which puts $A \leq B$ if
$B - A$ is positive semidefinite.
Another way to say this is that $\|A - \frac{1}{2}I\| = O(\sqrt{\epsilon})$.
Thus the Marcus-Spielman-Srivastava theorem tells us that
we can partition any set of small multiples of rank one
projections which sum up to the identity matrix, into two subsets each of
which sums up to approximately half the identity matrix.

(One could ask whether this result is still true if we replace ``small
multiples of rank one projections'' with ``positive matrices of small
norm''. It is not; the more general assertion fails even for diagonal
matrices. A better conjecture is that it remains true for positive matrices
with small trace. Diagonal matrices will not refute this assertion, by
the continuous Beck-Fiala theorem \cite{AA2}.)

The main result of this note is a modest extension to Theorem
\ref{mssthm}. A trivial modification of the argument of
\cite{MSS} shows that for any $t \in [0,1]$ we can find a set of
indices $S \subseteq \{1, \ldots, m\}$ such that
$$\left\|\sum_{i \in S} u_iu_i^* - tI\right\| = O(\sqrt{\epsilon});$$
this led us to ask which matrices $0 \leq B \leq I$ can be approximated
by a sum of the form $\sum_{i \in S} u_iu_i^*$ for suitable $S$.

Here is a simple example which shows that not every positive matrix
of norm at most 1 can be
approximated. Take $d = 2$, fix $M \in \Nb$, and define $u_1 =
\cdots = u_M = \frac{1}{\sqrt{M}}e_1$ and $u_{M+1} = \cdots =
u_{2M} = \frac{1}{\sqrt{M}}e_2$, where $e_1 =
\begin{pmatrix}
1\cr
0\end{pmatrix}$ and $e_2 =
\begin{pmatrix}
0\cr
1\end{pmatrix}$ are the standard basis vectors in $\Cb^2$. Then
$$u_iu_i^* =
\begin{cases}
\begin{pmatrix}
\frac{1}{M}&0\cr
0&0
\end{pmatrix}&1 \leq i \leq M\cr
\begin{pmatrix}
0&0\cr
0&\frac{1}{M}
\end{pmatrix}&M+1 \leq i \leq 2M
\end{cases}$$
so that $\sum u_iu_i^* = I$, as desired. Also $\|u_i\|^2 = \frac{1}{M}$
for all $i$. However, for any set of indices $S$, the matrix
$\sum_{i \in S}u_iu_i^*$ is diagonal, so the only matrices $0 \leq B
\leq I$ that can be approximated by a sum of this form are those which
are nearly diagonal.

We will prove the following result. Given any vectors $u_1, \ldots, u_m$
as in Theorem \ref{mssthm} and any choice of coefficients
$t_i \in [0,1]$ for $1 \leq i \leq m$, there is a set of indices $S$ such
that the matrix $\sum_{i \in S} u_iu_i^*$ approximates the matrix
$\sum t_i u_iu_i^*$ in operator norm to order $\epsilon^{1/8}$.

The set of matrices we claim can be approximated is simply the
convex hull of the set of matrices of the form $\sum_{i \in S} u_iu_i^*$ with
$S \subseteq \{1,\ldots, m\}$. So in effect we are saying that the set
of matrices of this form is already, in a sense, approximately convex.
This result can be viewed more abstractly as a ``Lyapunov type'' theorem,
as we will now explain.

Regard a choice of coefficients $t_1, \ldots, t_m \in [0,1]$ as
a single point $t \in [0,1]^m$. Then the map
$$\Phi: t \mapsto \sum t_iu_iu_i^*$$
takes the unit cube $[0,1]^m$ to a compact convex subset of the space
$M_d(\Cb)$ of $d\times d$ complex matrices. Meanwhile, any sum of the
form $\sum_{i \in S} u_iu_i^*$ is the image under $\Phi$ of one of
the vertices of the cube. So our result says that the image of $[0,1]^m$
is approximated by the image of its vertex set $\{0,1\}^m$.

The classical Lyapunov theorem \cite{lyap} states that the range of a
nonatomic vector-valued measure is compact and convex. In Lindenstrauss's
formulation \cite{lind}, we have a weak*-continuous linear map $\Psi:
L^\infty[0,1] \to \Rb^n$. The range of the vector-valued measure is the
set $\{\Psi(\chi_A): A \subseteq [0,1]$ measurable $\}$, the image of the
characteristic functions in $L^\infty[0,1]$ under this map. It is not
obvious that this set is compact or convex, but it is easy to see that
the image under $\Psi$ of the positive part of the unit ball,
$\{f \in L^\infty[0,1]: 0 \leq f \leq 1\}$, is compact and convex, and
the characteristic functions in $L^\infty[0,1]$ are the extreme
points of this set. So Lindenstrauss's version of Lyapunov's theorem
states that in this setting the image of the positive part of the unit
ball of $L^\infty[0,1]$
equals the image of its set of extreme points. In \cite{AA} the term
{\it Lyapunov theorem} was used to mean any result which states that for
some convex set $Q$ and some affine map of $Q$ into a linear space, the
image of $Q$ equals the image of its set of extreme points.

The extreme points of the cube $[0,1]^m$ are its vertices. So we can
consider our result, which states that everything in the image of
$[0,1]^m$ under the map $\Phi$ is approximated by the image of a
vertex, as an approximate Lyapunov type theorem. 

Our result can also be viewed from the perspective of control theory
as relating to the notion of a ``bang-bang control'' \cite{HLS}.

\section{The main result}

Our main result is achieved by a series of successive approximations.
We start with a straightforward generalization of (\cite{MSS}, Corollary 1.3).
Although we only need the special case where $r = 2$, we state it for general
$r$ for the sake of completeness.

\begin{lemma}\label{lem1}
Let $u_1, \ldots, u_m$ be column vectors in $\Cb^d$ such that
$\sum_i u_iu_i^* = I$ and $\|u_i\|^2 \leq \epsilon$ for all $i$. Also let
$r \in \Nb$ and suppose $t_1, \ldots, t_r > 0$ satisfy $\sum_j t_j = 1$.
Then there exists a partition of $\{1, \ldots, m\}$ into $r$ sets
$S_1, \ldots, S_r$ so that for $1 \leq j \leq r$,
$$\left\|\sum_{i \in S_j} u_iu_i^*\right\| \leq
t_j(1 + \sqrt{r\epsilon})^2.$$
\end{lemma}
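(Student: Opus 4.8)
The plan is to reduce the general-$r$ statement to the $r=2$ case in \cite{MSS} by a rescaling trick, since the structure of the bound $t_j(1+\sqrt{r\epsilon})^2$ is exactly what one gets from iterating a two-way split or from a single application of the MSS argument with a weighted target. The cleanest route, I think, is to run the Marcus--Spielman--Srivastava machinery directly but with unequal target weights: their proof actually shows that if $\sum_i v_iv_i^* = I$ with $\|v_i\|^2 \le \delta$, then for any prescribed weights $t_1,\dots,t_r > 0$ summing to $1$ one can partition the indices into $S_1,\dots,S_r$ with $\|\sum_{i\in S_j} v_iv_i^*\| \le (\sqrt{t_j} + \sqrt{\delta})^2$ for each $j$. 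Taking $\delta = \epsilon$ and using $(\sqrt{t_j}+\sqrt{\epsilon})^2 \le t_j(1 + \sqrt{\epsilon/t_j})^2 \le t_j(1+\sqrt{r\epsilon})^2$ when $t_j \ge 1/r$ handles the ``large'' pieces; but since the $t_j$ need not all be at least $1/r$, one instead scales the vectors: replace $u_i$ by $u_i/\sqrt{t_j}$ in the block destined for $S_j$.

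More concretely, here is the approach I would carry out. First I would handle $r=2$ with general weights $t_1 + t_2 = 1$ by a scaling reduction to Theorem \ref{mssthm}: this is the ``trivial modification'' alluded to in the introduction. Then I would induct on $r$. Given weights $t_1,\dots,t_r$, group them as $t_1$ versus $t_2 + \cdots + t_r$ and apply the two-weight $r=2$ result to get $S_1$ and its complement $T = S_2 \cup \cdots \cup S_r$ with $\|\sum_{i\in S_1} u_iu_i^*\| \le t_1(1+\sqrt{2\epsilon})^2$ and $\|\sum_{i \in T} u_iu_i^*\| \le (1-t_1)(1+\sqrt{2\epsilon})^2$. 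The vectors $\{u_i : i \in T\}$ now sum to a positive matrix $P$ with $\|P\| \le (1-t_1)(1+\sqrt{2\epsilon})^2$ rather than to the identity, so before recursing I would renormalize: set $v_i = P^{-1/2} u_i$ for $i \in T$ (on the range of $P$; one can assume $P$ is invertible, or pass to its support projection), so that $\sum_{i\in T} v_iv_i^* = I$ on that subspace, and $\|v_i\|^2 \le \|P^{-1/2}\|^2 \|u_i\|^2 \le \epsilon / \lambda_{\min}$. The point is that operator-norm bounds transfer back and forth under conjugation by $P^{1/2}$, so a partition of $T$ that is good for the $v_i$ with weights $t_2/(1-t_1), \dots, t_r/(1-t_1)$ pulls back to one that is good for the $u_i$ with weights $t_2, \dots, t_r$.

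The main obstacle is bookkeeping the error terms so that the two sources of $\sqrt{\epsilon}$ — the one from the first two-way split and the one from the recursive call, which sees a slightly inflated $\epsilon$ — combine into the clean single bound $t_j(1+\sqrt{r\epsilon})^2$ rather than something messier like a sum of $r-1$ nested factors. I expect this works out because $(1+\sqrt{2\epsilon})^2 (1 + \sqrt{(r-1)\epsilon'})^2$ with $\epsilon' = \epsilon \cdot (1+\sqrt{2\epsilon})^2/(\text{something} \le 1)$ telescopes favorably, but the honest version may instead want to prove the stronger intermediate bound $\|\sum_{i\in S_j} u_iu_i^*\| \le (\sqrt{t_j} + \sqrt{\epsilon})^2$ directly from the MSS interlacing-polynomials argument — their barrier-function computation is weight-agnostic — and only at the end coarsen $(\sqrt{t_j}+\sqrt{\epsilon})^2 \le t_j(1+\sqrt{r\epsilon})^2$, which is valid precisely when $\sqrt{\epsilon/t_j} \le \sqrt{r\epsilon}$ fails in general, so one must additionally note that if $t_j < 1/r$ the right-hand side $t_j(1+\sqrt{r\epsilon})^2$ could be smaller than $(\sqrt{t_j}+\sqrt{\epsilon})^2$ — meaning the scaling reduction, not the coarsening, is the safe path. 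I would therefore commit to the scaling reduction: replace each $u_i$ in the intended block $S_j$ by $u_i/\sqrt{t_j}$, observe the hypotheses of a suitable equal-weight partition statement are met with $\epsilon$ replaced by $\epsilon/\min_j t_j \le r\epsilon$ only after one further normalization step, and track that the final conjugation reintroduces exactly the factor $t_j$.
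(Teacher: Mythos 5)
There is a genuine gap here: you circle the right ingredients but never assemble a working argument, and the path you finally commit to does not close. The paper's proof applies the \emph{probabilistic} form of the Marcus--Spielman--Srivastava theorem (\cite{MSS}, Theorem 1.2) to random vectors $v_i$ in $\Cb^{rd}$, where $v_i$ equals $t_j^{-1/2}u_i$ placed in the $j$th summand \emph{with probability $t_j$}. Then $E\|v_i\|^2 = \sum_j t_j\cdot t_j^{-1}\|u_i\|^2 = r\|u_i\|^2 \le r\epsilon$ and $\sum_i Ev_iv_i^* = I_{rd}$, so Theorem 1.2 applies with parameter $r\epsilon$ and yields $\|\sum_{i\in S_j} t_j^{-1}u_iu_i^*\| \le (1+\sqrt{r\epsilon})^2$ for every $j$ simultaneously. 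Your proposal contains the scaling $u_i \mapsto u_i/\sqrt{t_j}$ but frames it deterministically — ``replace each $u_i$ in the intended block $S_j$ by $u_i/\sqrt{t_j}$'' — which is circular, since the blocks are exactly what you are trying to construct. The randomization over blocks with non-uniform probabilities $t_j$ is the missing idea that breaks the circularity, and it also explains why the relevant small parameter is $E\|v_i\|^2 = r\|u_i\|^2$ rather than $\epsilon/\min_j t_j$. On that last point, your claimed inequality $\epsilon/\min_j t_j \le r\epsilon$ is backwards: since the $t_j$ are positive and sum to $1$, $\min_j t_j \le 1/r$, so $\epsilon/\min_j t_j \ge r\epsilon$.

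Your fallback, induction on $r$ with renormalization by $P^{-1/2}$, also does not reach the stated bound. After the first split the surviving block satisfies $P \ge (1 - t_1(1+\sqrt{2\epsilon})^2)I$, so the renormalized vectors have $\|v_i\|^2 \le \epsilon' := \epsilon/(1 - t_1(1+\sqrt{2\epsilon})^2) \ge \epsilon$ (and $\epsilon'$ blows up, or the bound becomes vacuous, when $t_1$ is near $1$). Conjugating back, the inductive bound has the form $t_j(1+\sqrt{2\epsilon})^2(1+\sqrt{(r-1)\epsilon'})^2$, and since $\sqrt{2}+\sqrt{r-1} > \sqrt{r}$ for $r\ge 2$, this is strictly worse than $t_j(1+\sqrt{r\epsilon})^2$ even before accounting for $\epsilon' > \epsilon$. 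So the telescoping you hope for does not occur, and you acknowledge but do not resolve this. (Your other suggestion, proving $(\sqrt{t_j}+\sqrt{\epsilon})^2$ directly from the barrier argument, you correctly observe does not imply the stated bound when $t_j < 1/r$; it is also a reworking of the MSS interior that the one-line randomization above makes unnecessary.)
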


\begin{proof}
We modify the proof of (\cite{MSS}, Corollary 1.3). Let $v_1, \ldots, v_m$
be independent random vectors in $\Cb^{rd} \cong \Cb^d \oplus \cdots \oplus
\Cb^d$ ($r$ summands) such that for each $i$, with probability $t_j$ the
vector $v_i$ is $t_j^{-1/2}u_i$ in the $j$th summand and zero in all other
summands. Then 
$$E \|v_i\|^2 = \sum_{j=1}^r t_j\cdot t_j^{-1}\|u_i\|^2 = r\|u_i\|^2$$
and $Ev_iv_i^*$ is the block diagonal matrix $u_iu_i^* \oplus \cdots \oplus
u_iu_i^*$, so that
$$\sum_{i=1}^m Ev_iv_i^* = I_{rd}.$$
So (\cite{MSS}, Theorem 1.2) applies with $rd$ in place of $d$ and
$r\epsilon$ in place of $\epsilon$. We conclude that there is a partition
of $\{1, \ldots, m\}$ into $r$ sets $S_1, \ldots, S_r$ such that
$\|\sum_{i \in S_j} t_j^{-1} u_iu_i^*\| \leq (1 + \sqrt{r\epsilon})^2$
for $1 \leq j \leq r$.
\end{proof}

For each $j$ Lemma \ref{lem1} yields both the inequality
$$\sum_{i \in S_j} u_iu_i^* \leq t_j(1 + \sqrt{r\epsilon})^2I
= (t_j + t_j(2\sqrt{r\epsilon} + r\epsilon))I$$
and,
summing over $\bigcup_{j' \neq j} S_{j'}$, the inequality
$$I - \sum_{i \in S_j} u_iu_i^* = \sum_{i \not\in S_j} u_iu_i^*
\leq (1 - t_j)(1 + \sqrt{r\epsilon})^2I.$$
Thus
$$\sum_{i \in S_j} u_iu_i^* \geq (1 - (1-t_j)(1 + \sqrt{r\epsilon})^2)I
= (t_j - (1 - t_j)(2\sqrt{r\epsilon} + r\epsilon))I,$$
and by averaging the upper and lower bounds we get the following
estimate.

\begin{coro}\label{lemcor}
Under the same hypotheses as in Lemma \ref{lem1}, we have
$$\left\|\sum_{i \in S_j} u_iu_i^* - t_j'I\right\| \leq
\sqrt{r\epsilon} + \frac{1}{2}r\epsilon$$
for each $j$, where
$t_j' = t_j + (t_j - \frac{1}{2})(2\sqrt{r\epsilon} + r\epsilon)$.
\end{coro}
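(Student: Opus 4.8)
The plan is to play Lemma \ref{lem1} off against itself: its bound controls each block $\sum_{i \in S_j} u_iu_i^*$ from above, and the \emph{same} partition controls it from below once we sum the complementary blocks. Trapping $\sum_{i \in S_j} u_iu_i^*$ between two scalar multiples of $I$, we then invoke the elementary fact that a self-adjoint matrix $A$ with $aI \leq A \leq bI$ satisfies $\|A - \frac{1}{2}(a+b)I\| \leq \frac{1}{2}(b-a)$.

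First I would apply Lemma \ref{lem1} to produce the partition $S_1, \ldots, S_r$ and rewrite its conclusion as
$$\sum_{i \in S_j} u_iu_i^* \leq t_j(1 + \sqrt{r\epsilon})^2 I = (t_j + t_j\delta)I, \qquad \delta := 2\sqrt{r\epsilon} + r\epsilon,$$
which is the upper half of the sandwich. For the lower half I would use that this one partition satisfies the analogous inequality simultaneously for every block: summing $\sum_{i \in S_{j'}} u_iu_i^* \leq t_{j'}(1 + \sqrt{r\epsilon})^2 I$ over all $j' \neq j$ and using $\sum_i u_iu_i^* = I$ together with $\sum_{j' \neq j} t_{j'} = 1 - t_j$ gives
$$I - \sum_{i \in S_j} u_iu_i^* = \sum_{i \notin S_j} u_iu_i^* \leq (1 - t_j)(1 + \sqrt{r\epsilon})^2 I,$$
i.e.\ $\sum_{i \in S_j} u_iu_i^* \geq \bigl(t_j - (1 - t_j)\delta\bigr)I$.

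It then remains to read off the constants. With $A = \sum_{i \in S_j} u_iu_i^*$ sandwiched between $aI$ and $bI$ for $a = t_j - (1 - t_j)\delta$ and $b = t_j + t_j\delta$, the midpoint is $\frac{1}{2}(a+b) = t_j + (t_j - \frac{1}{2})\delta$, which is precisely the $t_j'$ of the statement, and the half-width is $\frac{1}{2}(b-a) = \frac{1}{2}\delta = \sqrt{r\epsilon} + \frac{1}{2}r\epsilon$. Since $A$ is self-adjoint, $aI \leq A \leq bI$ forces $\|A - t_j'I\| \leq \sqrt{r\epsilon} + \frac{1}{2}r\epsilon$, as claimed. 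There is no genuine obstacle here; the only step that needs a moment's care is obtaining the lower bound from the complementary blocks, and everything after that is the sandwich estimate together with elementary bookkeeping of the error terms.
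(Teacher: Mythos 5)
Your proposal is correct and follows essentially the same route as the paper: the upper bound comes directly from Lemma \ref{lem1}, the lower bound from summing the complementary blocks and using $\sum_i u_iu_i^* = I$, and the conclusion is the sandwich/averaging estimate with exactly the bookkeeping you describe. Nothing to add.
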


So the sums
$\sum_{i \in S_j} u_iu_i^*$ approximately equal the matrices $t_jI$.
In particular, taking $r = 2$ and letting $t_1 = t$ and $t_2 = 1 - t$, there
exists a set of indices $S$ such that
$$\left\|\sum_{i \in S} u_iu_i^* - tI\right\| = O(\sqrt{\epsilon}).$$

Next we generalize to the case where $\sum u_iu_i^*$ is less than
the identity matrix. The idea is to ignore regions where $\sum u_iu_i^*$ is
less than $\sqrt{\epsilon}$, and in regions where it is greater than
$\sqrt{\epsilon}$ to scale it up to 1. This entails some loss of accuracy.
From here on we restrict to the case $r = 2$.

\begin{lemma}\label{lem2}
Let $u_1, \ldots, u_m$ be column vectors in $\Cb^d$ such that
$B = \sum_i u_iu_i^* \leq I$ and $\|u_i\|^2 \leq \epsilon$ for all $i$.
Suppose $0 \leq t \leq 1$. Then there is a set of indices
$S \subseteq \{1, \ldots, m\}$ such that
$$\left\|\sum_{i \in S}u_iu_i^* - tB\right\| = O(\epsilon^{1/4}).$$
\end{lemma}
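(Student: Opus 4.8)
The plan is to reduce to Corollary \ref{lemcor} by manufacturing a new family of vectors whose rank-one sum is \emph{exactly} the identity, while keeping control of the maximum squared norm. Since $B = \sum u_iu_i^* \leq I$, the matrix $I - B$ is positive semidefinite; diagonalize it and write $I - B = \sum_k w_kw_k^*$ for finitely many vectors $w_k$ (for instance $w_k = \sqrt{\lambda_k}\,f_k$ where $\lambda_k, f_k$ are eigenvalues and orthonormal eigenvectors). By splitting each $w_k$ into $N$ equal pieces $\frac{1}{\sqrt N}w_k$ for $N$ large, we may assume $\|w_k\|^2 \leq \epsilon$ as well. Now the enlarged family $\{u_i\} \cup \{w_k\}$ satisfies the hypotheses of Corollary \ref{lemcor} with $r = 2$: its rank-one sum is $B + (I-B) = I$, and every vector has squared norm at most $\epsilon$.

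Next I would apply the $r = 2$ case of Corollary \ref{lemcor} with $t_1 = t$, $t_2 = 1-t$ to this enlarged family. This produces a subset $T$ of the combined index set with $\|\sum_{\text{indices in }T} (\text{vector})(\text{vector})^* - t'I\| = O(\sqrt\epsilon)$, where $t' = t + (t - \tfrac12)O(\sqrt\epsilon)$, so in fact $\|\cdots - tI\| = O(\sqrt\epsilon)$. Write $T = S \cup T'$ where $S$ indexes the chosen $u_i$'s and $T'$ indexes the chosen $w_k$'s, so that
$$\sum_{i \in S} u_iu_i^* + \sum_{k \in T'} w_kw_k^* = tI + E, \qquad \|E\| = O(\sqrt\epsilon).$$
Compressing this identity to the range of $B$ is the natural move: intuitively, on that range $\sum u_iu_i^* = B$ is large, so the $w_k$ contributions there should be negligible, forcing $\sum_{i\in S} u_iu_i^* \approx tB$. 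But this compression is lossy precisely where $B$ has small eigenvalues, which is exactly the source of the $\epsilon^{1/4}$ (rather than $\sqrt\epsilon$) in the statement.

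To make that precise I would use the spectral decomposition of $B$ and follow the strategy indicated in the paragraph preceding the lemma: split according to whether the eigenvalue of $B$ is above or below $\sqrt\epsilon$. Let $P$ be the spectral projection of $B$ onto eigenvalues $\geq \sqrt\epsilon$ and $P^\perp = I - P$. On $\operatorname{ran}(P^\perp)$ everything in sight has norm $O(\sqrt\epsilon)$ — both $PBP^\perp$-type cross terms and the compressed pieces — so that part contributes error $O(\sqrt\epsilon)$ and $tB$ restricted there is also $O(\sqrt\epsilon)$. On $\operatorname{ran}(P)$, where $B \geq \sqrt\epsilon$, I would compare $P(\sum_{i\in S}u_iu_i^*)P$ with $tPBP$ using the operator-convexity/monotonicity available because $B^{-1/2}$ restricted to $\operatorname{ran}(P)$ has norm $\leq \epsilon^{-1/4}$; conjugating the displayed identity by $B^{-1/2}P$ replaces the $O(\sqrt\epsilon)$ error by $\epsilon^{-1/4}\cdot O(\sqrt\epsilon) = O(\epsilon^{1/4})$, and then conjugating back by $B^{1/2}P$ (which has norm $\leq 1$) preserves that bound while turning $tP$ into $tPBP$. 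The main obstacle is bookkeeping the cross terms between $\operatorname{ran}(P)$ and $\operatorname{ran}(P^\perp)$ and verifying that dropping the $\sum_{k\in T'}w_kw_k^*$ term on $\operatorname{ran}(P)$ only costs $O(\epsilon^{1/4})$; the cleanest route is to note $0 \leq \sum_{k\in T'}w_kw_k^* \leq I - B \leq I$, conjugate by $B^{-1/2}P$ to bound its compression by $\epsilon^{-1/2}\|P(I-B)P\|$... which is not small, so instead one should bound $\|B^{-1/2}P(I-B)PB^{-1/2}\|$ directly via $\|B^{-1}(I-B)\|$ on $\operatorname{ran}(P)$ — here $B^{-1}(I-B) = B^{-1} - I$ has norm $\leq \epsilon^{-1/2}$, giving the term $\epsilon^{-1/2}$, which after the final conjugation by $B^{1/2}$ (norm $\leq 1$) is still too big; the correct accounting keeps one factor of $B^{1/2}$ on each side throughout and yields the stated $O(\epsilon^{1/4})$.
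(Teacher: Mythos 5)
There is a genuine gap at the very first step, and it is not repaired by the bookkeeping you attempt afterwards. Applying Corollary \ref{lemcor} to the enlarged family $\{u_i\}\cup\{w_k\}$ controls only the \emph{total} selected sum: you get $\sum_{i\in S}u_iu_i^* + \sum_{k\in T'}w_kw_k^* \approx tI$, but nothing forces the selection to split this total proportionally between the two sub-families. All you know about the $w$-part is $0 \leq \sum_{k\in T'}w_kw_k^* \leq I-B$, and $I-B$ need not be small on the range of your projection $P$ (you only arranged $B\ge\sqrt\epsilon$ there, so $I-B$ can be as large as $(1-\sqrt\epsilon)I$ on that range). Concretely, take $B=\frac12 I$ and $t=\frac12$: the corollary is perfectly consistent with the chosen set containing \emph{all} of the $w_k$ and essentially none of the $u_i$, giving $\sum_{i\in S}u_iu_i^*\approx 0$ while $tB=\frac14 I$. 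Your own closing sentences show you ran into exactly this obstruction: the compression of $\sum_{k\in T'}w_kw_k^*$ to $\mathrm{ran}(P)$ cannot be bounded away from $P(I-B)P$, and the final claim that ``the correct accounting\dots yields the stated $O(\epsilon^{1/4})$'' is an assertion, not an argument. (Two smaller errors: conjugating by an operator of norm $\le\epsilon^{-1/4}$ multiplies operator norms by $\epsilon^{-1/2}$, not $\epsilon^{-1/4}$; and conjugating by $B^{-1/2}P$ and then back by $B^{1/2}P$ is circular, since the net effect on the compressed block is the identity.)

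The paper's proof avoids auxiliary vectors altogether and instead rescales the $u_i$ themselves \emph{before} making any selection. With $P_E$ the spectral projection of $B$ for $[\sqrt\epsilon,1]$ and $C=B^{-1/2}P_E$, the vectors $v_i=Cu_i$ satisfy $\sum_i v_iv_i^*=CBC=P_E$ exactly and $\|v_i\|^2\le\|C\|^2\|u_i\|^2\le\epsilon^{-1/2}\cdot\epsilon=\sqrt\epsilon$; Corollary \ref{lemcor} applied on $E$ with parameter $\sqrt\epsilon$ then gives a set $S$ with $\|\sum_{i\in S}v_iv_i^*-tP_E\|=O(\epsilon^{1/4})$, and conjugating back by $B^{1/2}$ yields $\|P_E(D-tB)P_E\|=O(\epsilon^{1/4})$ for $D=\sum_{i\in S}u_iu_i^*$. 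The remaining blocks are handled as in your last paragraph, using $0\le D\le B$, the bound $(I-P_E)B(I-P_E)\le\sqrt\epsilon I$, and Cauchy--Schwarz for the cross terms. The key point you are missing is that the loss of exponent from $1/2$ to $1/4$ must be incurred in the hypotheses fed \emph{into} the selection lemma (via the blow-up of $\|v_i\|^2$ to $\sqrt\epsilon$), not by manipulating the error after the selection has been made.
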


\begin{proof}
Let $E \subseteq \Cb^d$ be the spectral subspace of $B$ for the interval
$[\sqrt{\epsilon}, 1]$ and let $P_E$ be the orthogonal projection onto $E$,
so that $\sqrt{\epsilon}P_E \leq BP_E \leq P_E$. Define $C = B^{-1/2}P_E$,
so that $P_E \leq C \leq \epsilon^{-1/4}P_E$, and set $v_i = Cu_i$ for all
$i$. Then $v_i \in E$ and $\|v_i\|^2 \leq \sqrt{\epsilon}$ for all $i$, and
$$\sum v_iv_i^* = C\left(\sum u_iu_i^*\right)C = CBC = P_E.$$
So by Corollary \ref{lemcor} there exists a set of indices $S$ such that
$$\left\|\sum_{i \in S} v_iv_i^* - tP_E\right\| = O(\epsilon^{1/4}).$$
Now let $D = \sum_{i \in S} u_iu_i^*$ and observe that
$$\|P_E(D - tB)P_E\| = \left\|B^{1/2}\left(\sum_{i \in S}v_iv_i^* -
tP_E\right)B^{1/2}\right\| = O(\epsilon^{1/4}).$$
Also $0 \leq D \leq B$, so that $(I - P_E)D(I - P_E) \leq
(I - P_E)B(I - P_E) \leq \sqrt{\epsilon}I$. Finally, if $v \in E$ and
$w \in E^\perp$ then
$$|\langle Dv,w\rangle| = |\langle v,Dw\rangle| =
|\langle D^{1/2}v,D^{1/2}w\rangle| \leq \|v\|\cdot \epsilon^{1/4}\|w\|,$$
so that $\|P_E D(I - P_E)\|, \|(I - P_E)DP_E\| \leq \epsilon^{1/4}$.
Since $P_EtB(I - P_E) = (I - P_E)tBP_E = 0$ and $\|(I - P_E)tB(I - P_E)\|
\leq \sqrt{\epsilon}$, we can conclude that $\|D - tB\| = O(\epsilon^{1/4})$.
\end{proof}

We now come to our main result.

\begin{theo}\label{main}
Let $u_1, \ldots, u_m$ be column vectors in $\Cb^d$ such that
$\sum_i u_iu_i^* \leq I$ and $\|u_i\|^2 \leq \epsilon$ for all $i$.
Suppose $0 \leq t_i \leq 1$ for $1 \leq i \leq m$. Then there is a set
of indices $S \subseteq \{1, \ldots, m\}$ such that
$$\left\|\sum_{i \in S}u_iu_i^* - \sum_{i=1}^m t_iu_iu_i^*\right\|
= O(\epsilon^{1/8}).$$
\end{theo}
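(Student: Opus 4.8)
The plan is to reduce Theorem \ref{main} to Lemma \ref{lem2} by a dyadic (binary) decomposition of the coefficients $t_i$. The obstacle is that Lemma \ref{lem2} only handles the case where \emph{all} coefficients equal a single common value $t$; here we have a whole vector of coefficients $t_i \in [0,1]$, and we need to ``sort'' the indices by the size of $t_i$ and treat each group separately. The natural device is the binary expansion $t_i = \sum_{k=1}^\infty b_{i,k}2^{-k}$ with $b_{i,k} \in \{0,1\}$. Truncating at level $N$ introduces an error $\|\sum_i (t_i - \sum_{k\le N} b_{i,k}2^{-k})u_iu_i^*\| \le 2^{-N}\|\sum_i u_iu_i^*\| \le 2^{-N}$, which we will absorb into the final $O(\epsilon^{1/8})$ bound by choosing $N$ appropriately (roughly $N \approx \frac{1}{8}\log_2(1/\epsilon)$).

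\textbf{Step 1 (dyadic layers).} For each $k \ge 1$ let $T_k = \{i : b_{i,k} = 1\}$ and $B_k = \sum_{i \in T_k} u_iu_i^*$. Then $B_k \le \sum_i u_iu_i^* \le I$ and each $u_i$ with $i \in T_k$ still satisfies $\|u_i\|^2 \le \epsilon$, so Lemma \ref{lem2} applies to the family $\{u_i : i \in T_k\}$ \emph{with $t = \frac{1}{2}$}: there is $S_k \subseteq T_k$ with $\|\sum_{i \in S_k} u_iu_i^* - \frac{1}{2}B_k\| = O(\epsilon^{1/4})$. Now $\sum_{k=1}^N 2^{-(k-1)}\cdot\frac12 B_k = \sum_{k=1}^N 2^{-k}B_k = \sum_i (\sum_{k\le N} b_{i,k}2^{-k})u_iu_i^*$, which is exactly the truncated target matrix. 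So if we could take the ``$2^{-(k-1)}$-weighted union'' of the $S_k$ we would be done up to $\sum_k 2^{-(k-1)}O(\epsilon^{1/4}) = O(\epsilon^{1/4})$ plus the truncation error — but weighted unions of index sets do not make sense, so this is where the real work lies.

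\textbf{Step 2 (iterated halving to realize the weights $2^{-k}$).} The fix is to peel off the layers one at a time, each time applying Lemma \ref{lem2} on the \emph{complement} of what has already been chosen. Concretely, set $R_0 = \{1,\dots,m\}$. Given $R_{k-1}$, first apply Lemma \ref{lem2} to $\{u_i : i \in R_{k-1} \cap T_k\}$ with $t = \frac12$ to extract $S_k \subseteq R_{k-1}\cap T_k$ with $\sum_{i\in S_k}u_iu_i^* \approx \frac12\sum_{i \in R_{k-1}\cap T_k}u_iu_i^*$; then apply Lemma \ref{lem2} to $\{u_i : i \in R_{k-1}\setminus T_k\}$... hmm, actually the cleaner bookkeeping is: at stage $k$ we have already committed indices achieving (approximately) $\sum_{j<k}2^{-j}B_j$, and a ``remainder'' pool $R_{k-1}$ whose sum is approximately $2^{-(k-1)}$ times (the tail), from which we split off a half to serve as the coefficient-$2^{-k}$ contribution and keep the other half as $R_k$. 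Let me not commit to the exact recursion here; the point is that each of the $N$ stages invokes Lemma \ref{lem2} at most twice, contributes an error $O(\epsilon^{1/4})$ scaled by the current weight (which is $\le 1$), and these errors sum to $N \cdot O(\epsilon^{1/4})$.

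\textbf{Step 3 (choosing $N$ and collecting errors).} The total error is bounded by (truncation) $+$ (accumulated Lemma \ref{lem2} errors) $= O(2^{-N}) + O(N\epsilon^{1/4})$. Choosing $N$ so that $2^{-N} \approx \epsilon^{1/8}$, i.e.\ $N \approx \frac18\log_2(1/\epsilon)$, makes the first term $O(\epsilon^{1/8})$ and the second term $O(\epsilon^{1/4}\log(1/\epsilon)) = O(\epsilon^{1/8})$ (since $\epsilon^{1/8}\log(1/\epsilon) \to 0$, so $\epsilon^{1/4}\log(1/\epsilon) = o(\epsilon^{1/8})$). Finally $S = \bigsqcup_k S_k$ is the desired index set. \textbf{The hard part} will be Step 2: setting up the recursion so that the ``remainder pool'' $R_{k-1}$ really does have sum close to the correct multiple of the tail $\sum_{j \ge k}2^{-(j-k+1)}B_j$, and verifying that the approximation errors compose additively rather than compounding multiplicatively — in particular one must check that applying Lemma \ref{lem2} to a family whose sum $B$ is only \emph{approximately} a given matrix (rather than exactly) still gives a usable bound, which it does since Lemma \ref{lem2} controls $\|\sum_{i\in S}u_iu_i^* - tB\|$ in terms of the \emph{actual} $B = \sum_{i} u_iu_i^*$ of whatever family it is handed.
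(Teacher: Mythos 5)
There is a genuine gap, and it sits exactly where you flagged it: Step 2. The recursion you sketch does not work as stated. At stage $k$ you apply Lemma \ref{lem2} (with $t=\tfrac12$) to $\{u_i : i \in R_{k-1}\cap T_k\}$, and to conclude that the accepted mass is $\approx 2^{-k}B_k$ you need $\sum_{i\in R_{k-1}\cap T_k}u_iu_i^* \approx 2^{-(k-1)}B_k$. But the halvings at stages $j<k$ are balanced only with respect to the layers $T_j$, not with respect to $T_k$: Lemma \ref{lem2} controls $\sum_{i\in A_j}u_iu_i^*$ against $\tfrac12\sum_{i\in R_{j-1}\cap T_j}u_iu_i^*$ and says nothing about how $A_j$ meets $T_k$. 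For instance, if $t_i=\tfrac34$ on a set $H$ and $t_i=\tfrac12$ elsewhere, then $T_1=\{1,\dots,m\}$ and $T_2=H$; the stage-1 split could put all of $H$ into the accepted half $A_1$, leaving $R_1\cap T_2=\emptyset$, so stage 2 contributes nothing and the output is $\approx\tfrac12 I$ instead of $\tfrac12 I+\tfrac14\sum_{i\in H}u_iu_i^*$ --- an error of order $\|\sum_{i\in H}u_iu_i^*\|$, not $O(\epsilon^{1/4})$. To repair this you must refine the halving at stage $k$ to respect every cell of the partition generated by $T_1,\dots,T_k$, which costs $O(2^N)$ invocations of Lemma \ref{lem2} rather than $O(N)$ and forces $2^N\approx\epsilon^{-1/8}$; at that point the errors still balance, but the construction has effectively collapsed into grouping indices by their (truncated) coefficient value.

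The underlying missed observation is that Lemma \ref{lem2} already allows an \emph{arbitrary} common coefficient $t\in[0,1]$, not just $t=\tfrac12$, so no dyadic machinery is needed. The paper's proof simply takes $n$ to be the nearest integer to $\epsilon^{-1/8}$, rounds each $t_i$ to a multiple $k/n$, groups the indices by rounded value into sets $S_1,\dots,S_n$, and applies Lemma \ref{lem2} once to each group with $t=k/n$. The per-group errors total $n\cdot O(\epsilon^{1/4})=O(\epsilon^{1/8})$ and the rounding error is at most $\tfrac1n\|\sum u_iu_i^*\|\le\tfrac1n=O(\epsilon^{1/8})$. Your Steps 1 and 3 correctly identify the two error sources (discretization plus accumulated Lemma \ref{lem2} errors) and how to balance them, but the mechanism for realizing non-half weights should be ``one application of Lemma \ref{lem2} per coefficient value,'' not iterated halving across overlapping layers.
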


\begin{proof}
Let $n$ be the nearest integer to $\epsilon^{-1/8}$ and partition
$\{1, \ldots, m\}$ into $n$ sets $S_1, \ldots, S_n$ such that
$|t_i - \frac{k}{n}| \leq \frac{1}{n}$ for all $i \in S_k$. For each
$1 \leq k \leq n$ and each $i \in S_k$, let $t_i' = \frac{k}{n}$.

For each $k$, by Lemma \ref{lem2} we can find a subset $S_k' \subseteq S_k$
such that $\|\sum_{i \in S_k'} u_iu_i^* - \frac{k}{n}\sum_{i\in S_k} u_iu_i^*\|
= O(\epsilon^{1/4})$. Let $S = \bigcup_{k = 1}^n S_k'$. Then
$$\left\|\sum_{i \in S} u_iu_i^* - \sum_{i=1}^m t_i'u_iu_i^*\right\|
\leq \sum_{k=1}^n\left\|\sum_{i \in S_k'} u_iu_i^*
- \frac{k}{n}\sum_{i \in S_k} u_iu_i^*\right\|
= n\cdot O(\epsilon^{1/4}) = O(\epsilon^{1/8})$$
so
$$\left\|\sum_{i \in S} u_iu_i^* - \sum_{i=1}^m t_iu_iu_i^*\right\|
\leq \left\|\sum_{i \in S} u_iu_i^* - \sum_{i=1}^m t_i'u_iu_i^*\right\|
+ \left\|\sum_{i=1}^m (t_i'-t_i)u_iu_i^*\right\| = O(\epsilon^{1/8}),$$
as desired.
\end{proof}

Thus, every linear combination of the matrices $u_iu_i^*$ with coefficients
in $[0,1]$ can be approximated to order $\epsilon^{1/8}$ by a sum of the form
$\sum_{i \in S} u_iu_i^*$.

\section{Infinite dimensions}

The power of the Marcus-Spielman-Srivastava result lies in the fact that
the estimate depends only on the size of the vectors $u_i$ (via the
parameter $\epsilon$), and not on the number of vectors or the dimension
of the space. Consequently, it should not be surprising that their result
generalizes to infinite-dimensional Hilbert spaces. Here we shift notation
and write $u\otimes u$ for the rank one positive operator
$v \mapsto \langle v,u\rangle u$, whenever $u$ is a vector in a complex
Hilbert space. Also, infinite sums of the form
$\sum u_i\otimes u_i$ should be understood in the sense of either strong
or weak operator convergence (the two are equivalent for bounded increasing
sequences). The infinite-dimensional version of Theorem \ref{mssthm} can
then be stated as follows:

\begin{theo}\label{inf}
Let $(u_i)$ be a sequence of vectors in a Hilbert space $H$ such that
$\sum_i u_i\otimes u_i = I$ and $\|u_i\|^2 \leq \epsilon$ for all $i$.
Then there is a partition of $\Nb$ into sets $S_1$ and $S_2$ so that
for $j \in \{1,2\}$,
$$\left\|\sum_{i \in S_j} u_i\otimes u_i\right\| \leq \frac{1}{2} +
\sqrt{2\epsilon} + \epsilon.$$
\end{theo}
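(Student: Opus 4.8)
The plan is to deduce the infinite-dimensional Theorem \ref{inf} from the finite-dimensional Theorem \ref{mssthm} by a compactness/ultrafilter argument, reducing an infinite system to finitely many of its members at a time and then passing to a limit. The core difficulty is that Theorem \ref{mssthm} requires the vectors to sum \emph{exactly} to the identity on a \emph{finite}-dimensional space, whereas here we have a countable family summing to $I$ on an infinite-dimensional $H$; so the main obstacle will be manufacturing, for each finite truncation, an honest finite-dimensional instance of the hypothesis without destroying the norm bound $\|u_i\|^2 \le \epsilon$, and then organizing the resulting partitions coherently into a single partition of $\Nb$.

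First I would handle the truncation. Fix $N$ and look at $u_1, \ldots, u_N$. Let $P_N$ be the orthogonal projection onto $E_N := \mathrm{span}\{u_1, \ldots, u_N\}$, a finite-dimensional subspace, and set $T_N = \sum_{i=1}^N u_i \otimes u_i$, a positive operator on $E_N$ with $0 \le T_N \le P_N$. The deficiency $P_N - T_N$ is a positive operator on the finite-dimensional space $E_N$, so by the spectral theorem we may write $P_N - T_N = \sum_{k} w_k \otimes w_k$ for finitely many vectors $w_k \in E_N$ — and by subdividing each $w_k$ into many parallel copies $w_k/\sqrt{\ell}$ (taking $\ell$ large) we can arrange $\|w_k\|^2 \le \epsilon$ for every $k$. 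Now the augmented finite family $\{u_1, \ldots, u_N\} \cup \{w_k\}$ sums exactly to $P_N = I_{E_N}$ on the $\dim E_N$-dimensional space $E_N$ and every vector has squared norm at most $\epsilon$, so Theorem \ref{mssthm} applies on $E_N$ and yields a partition of the augmented index set into two pieces, each summing (in operator norm on $E_N$, hence on $H$ after padding with zeros off $E_N$) to something of norm at most $\frac12 + \sqrt{2\epsilon} + \epsilon$. Restricting attention to the original indices $\{1, \ldots, N\}$, we obtain a subset $S^{(N)} \subseteq \{1, \ldots, N\}$ with $\|\sum_{i \in S^{(N)}} u_i \otimes u_i\| \le \frac12 + \sqrt{2\epsilon} + \epsilon$ and likewise for its complement within $\{1, \ldots, N\}$.

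Next I would pass to the limit. Fix a nonprincipal ultrafilter $\mathcal U$ on $\Nb$ and define $S = \{i \in \Nb : \{N \ge i : i \in S^{(N)}\} \in \mathcal U\}$, so that for each $i$, membership of $i$ in $S$ is decided by the ultrafilter limit of the finite choices. For any fixed finite-rank projection $Q$ (say onto $\mathrm{span}\{e_1, \ldots, e_M\}$ for a fixed orthonormal basis, or better onto $\mathrm{span}\{u_1,\dots,u_M\}$) and any $M$, the operators $Q\big(\sum_{i \in S,\, i \le N} u_i \otimes u_i\big)Q$ converge along $\mathcal U$ to $Q\big(\sum_{i \in S} u_i \otimes u_i\big)Q$, because for $N$ large enough the finite set $S^{(N)} \cap \{1, \ldots, M'\}$ agrees with $S \cap \{1, \ldots, M'\}$ for any prescribed $M'$ (this is where the ultrafilter does its work). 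Hence $\|Q(\sum_{i\in S} u_i\otimes u_i)Q\| \le \frac12 + \sqrt{2\epsilon}+\epsilon$ for every finite-rank $Q$, and since $\sup_Q \|QAQ\| = \|A\|$ for any bounded operator $A$, we conclude $\|\sum_{i \in S} u_i \otimes u_i\| \le \frac12 + \sqrt{2\epsilon} + \epsilon$. The identical argument applied to $\Nb \setminus S$ — noting that $i \notin S$ iff $i$ is placed in the complementary part cofinally often along $\mathcal U$, and the finite complements within $\{1,\dots,N\}$ also satisfy the bound — gives $\|\sum_{i \notin S} u_i \otimes u_i\| \le \frac12 + \sqrt{2\epsilon} + \epsilon$. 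Taking $S_1 = S$ and $S_2 = \Nb \setminus S$ completes the proof.

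The step I expect to be delicate is the limit passage: one must be careful that the weak (or strong) operator convergence of the truncated sums interacts correctly with the ultrafilter selection of $S$, since a priori the sets $S^{(N)}$ need not be nested or coherent. Compressing to finite-rank projections circumvents this — on each fixed finite-rank corner only finitely many indices matter, and the ultrafilter stabilizes them — so the argument is clean once phrased that way. A purely diagonal (subsequence) argument would also work in place of the ultrafilter, but the ultrafilter formulation is tidier and avoids repeated passage to subsequences.
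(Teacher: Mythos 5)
Your proof is correct, and its overall architecture --- truncate, manufacture an exact finite-dimensional instance of Theorem \ref{mssthm}, partition, and pass to a limit by compactness --- is the same as the paper's. The interesting difference is in how the finite-dimensional instance is produced. The paper chooses truncation points $n_k$ so that $B_k = \sum_{i\le n_k} u_i\otimes u_i \ge (1-\frac1k)I$ and then conjugates by $B_k^{-1/2}$ to renormalize the sum to the identity; this perturbs the hypothesis (the renormalized vectors have squared norm up to $(1-\frac1k)^{-1}\epsilon$), so the clean constant $\frac12+\sqrt{2\epsilon}+\epsilon$ is only recovered in the $k\to\infty$ limit, and the argument is run in two stages (finite-dimensional range space first, then general $H$). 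You instead \emph{augment} the family: you complete $\sum_{i\le N}u_i\otimes u_i$ to the identity of $E_N=\mathrm{span}\{u_1,\dots,u_N\}$ by adjoining small rank-one pieces obtained from the spectral decomposition of $P_N-T_N$, subdivided so each has squared norm at most $\epsilon$. This gives the exact bound at every finite stage (discarding the auxiliary indices only decreases a positive sum), and by working in $E_N$ you collapse the paper's two compactness arguments into a single ultrafilter limit. Your limit passage is sound, though the compression by finite-rank $Q$ is not really needed: for each fixed $M'$, $\mathcal U$-most $N$ satisfy $S\cap\{1,\dots,M'\}\subseteq S^{(N)}$, so $\sum_{i\in S,\, i\le M'}u_i\otimes u_i \le \sum_{i\in S^{(N)}}u_i\otimes u_i \le cI$ by positivity, and letting $M'\to\infty$ (monotone strong convergence) gives $\sum_{i\in S}u_i\otimes u_i\le cI$ directly; the same applies to the complement. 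Be slightly careful in the write-up to phrase the stabilization in terms of $S^{(N)}\cap\{1,\dots,M'\}$ agreeing with $S\cap\{1,\dots,M'\}$ for $\mathcal U$-most $N$, rather than convergence of the $N$-indexed partial sums, but this is cosmetic.
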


The proof of Theorem \ref{inf} is fairly straightforward. We first consider
the case of an infinite sequence of vectors in a finite-dimensional space
$\Cb^d$. This case can be reduced to Theorem \ref{mssthm} by, for each
$k \in \Nb$, choosing an index $n_k$ such that
$B_k = \sum_{i=1}^{n_k} u_i \otimes u_i \geq (1 - \frac{1}{k})I$.
Then replacing each $u_i$, $1 \leq i \leq n_k$, with the vector
$B_k^{-1/2}u_i$ as in the proof of Lemma \ref{lem2} gives us the hypothesis
of Theorem \ref{mssthm}. This yields, for each $k$, a partition of the
index set $\{1, \ldots, n_k\}$. Finally, as $k \to \infty$ we can take a
cluster point of this sequence of partitions in the compact space
$\{0,1\}^\Nb$ (the space of all infinite 0-1 sequences).

We can then pass to the case of infinite-dimensional $H$ by projecting
the sequence $(u_i)$ into a finite-dimensional subspace, invoking the
finite-dimensional result just discussed, and applying another compactness
argument as that subspace increases to $H$. Extending the result to
uncountable families of vectors in nonseparable Hilbert spaces presents
no difficulties.

All of the results in Section 1 generalize in a similar way to infinite
dimensions, with the same estimates in every case. One might also
predict that in infinite dimensions factoring out compact operators
could allow us to convert the approximate results of Section 1 into
exact ones. This is indeed the case.

\begin{theo}\label{compact}
Let $(u_i)$ be a sequence of vectors in a complex Hilbert space such
that $\|u_i\| \to 0$ and $\sum u_i\otimes u_i \leq I$. Suppose
$0 \leq t_i\leq 1$ for each $i$. Then there is a set of indices
$S \subseteq \Nb$ such that the operator
$$\sum_{i \in S} u_i\otimes u_i - \sum_{i=1}^\infty t_iu_i\otimes u_i$$
is compact.
\end{theo}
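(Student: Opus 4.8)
The plan is to run the finite-dimensional argument of Theorem \ref{main} at each of a sequence of finite truncations and then pass to a cluster point of the resulting sets of indices, exploiting the fact that $\|u_i\| \to 0$ to make the approximation errors go to zero. First I would fix, for each $k \in \Nb$, a threshold $\epsilon_k = \sup_{i > N_k} \|u_i\|^2$, where $N_k \to \infty$ is chosen so that $\epsilon_k \to 0$; such a choice is possible since $\|u_i\| \to 0$. The point is that the ``tail'' $\sum_{i > N_k} u_i \otimes u_i$ consists of vectors all of whose norms are at most $\epsilon_k^{1/2}$, so the finite-dimensional machinery of Section 1 applies to it with parameter $\epsilon_k$ once we compress to a suitable finite-dimensional subspace (as in the proof of Theorem \ref{inf}, projecting onto an increasing sequence of finite-dimensional subspaces and taking a further cluster point to handle the infinite-dimensional ambient space). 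The finitely many initial vectors $u_1, \ldots, u_{N_k}$ can simply be assigned according to whether $t_i \geq \tfrac12$ or not, or indeed handled exactly by a separate compactness argument, since they contribute only finitely many rank one operators.

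Concretely, for each $k$ I would apply the infinite-dimensional analogue of Theorem \ref{main} to the sequence $(u_i)_{i > N_k}$ with coefficients $(t_i)_{i > N_k}$: this yields a set $T_k \subseteq \{i : i > N_k\}$ with
$$\left\|\sum_{i \in T_k} u_i\otimes u_i - \sum_{i > N_k} t_iu_i\otimes u_i\right\| = O(\epsilon_k^{1/8}).$$
Set $S_k = T_k \cup \{i \leq N_k : t_i \geq \tfrac12\}$, regarded as an element $\chi_{S_k} \in \{0,1\}^\Nb$. By compactness of $\{0,1\}^\Nb$ there is a subsequence $\chi_{S_{k_\ell}} \to \chi_S$ pointwise for some $S \subseteq \Nb$. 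I claim this $S$ works. The operator $R = \sum_{i \in S} u_i\otimes u_i - \sum_{i} t_iu_i\otimes u_i$ is the target; I must show it is compact. Write $R = R_\ell' + R_\ell''$ where $R_\ell'$ collects the contributions from indices $i \leq N_{k_\ell}$ and $R_\ell''$ from $i > N_{k_\ell}$. Then $R_\ell'$ is finite rank, hence compact, so it suffices to bound $\|R_\ell''\|$.

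The key estimate is that $\|R_\ell''\|$ is small for large $\ell$. On the tail, $\sum_{i > N_{k_\ell}, i \in S} u_i\otimes u_i$ and $\sum_{i > N_{k_\ell}, i \in S_{k_\ell}} u_i\otimes u_i$ differ only in those coordinates where $\chi_S$ and $\chi_{S_{k_\ell}}$ disagree; since both operators are dominated by $\sum_{i > N_{k_\ell}} u_i\otimes u_i$ and by pointwise convergence the disagreement set eventually lies entirely among indices with $\|u_i\|^2 \leq \epsilon_{k_\ell}$ — more care is needed here because infinitely many coordinates can still disagree, so I would instead argue that $\sum_{i > N_{k_\ell}, i \in S_{k_\ell}} u_i\otimes u_i - \sum_{i > N_{k_\ell}} t_i u_i \otimes u_i$ has norm $O(\epsilon_{k_\ell}^{1/8})$ by construction, and that replacing $S_{k_\ell}$ by $S$ within the tail changes the first sum by an operator whose norm, using $0 \le u_i \otimes u_i$ and $\|u_i\|^2 \le \epsilon_{k_\ell}$, can be controlled because the symmetric difference interacts with a bounded positive operator of small ``atom size.'' Thus $\|R_\ell''\| \to 0$, which forces $R$ to be a norm limit of finite rank operators, i.e. compact. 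The main obstacle will be precisely this last point: pointwise convergence of the $\chi_{S_k}$ does not by itself control the \emph{norm} of the difference $\sum_{i \in S \triangle S_{k_\ell},\, i > N_{k_\ell}} u_i\otimes u_i$ on the tail, since that symmetric difference may be infinite. The way around it is to observe that each such sum is sandwiched, $0 \le \sum_{i \in S \triangle S_{k_\ell},\, i > N_{k_\ell}} u_i\otimes u_i \le \sum_{i > N_{k_\ell}} u_i\otimes u_i \le I$, and then to use a diagonal refinement of the cluster-point extraction — choosing $S$ so that for each $\ell$ the disagreement with $S_{k_\ell}$ occurs only beyond an index $M_\ell$ with $M_\ell \to \infty$ — so that the tail-disagreement operator is itself a norm limit of finite rank operators, hence compact, and can be absorbed. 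Once that is arranged the compactness of $R$ is immediate.
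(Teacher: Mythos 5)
There is a genuine gap, and you have in fact identified it yourself without resolving it: the cluster-point step cannot be made to work. Pointwise convergence $\chi_{S_{k_\ell}} \to \chi_S$ in $\{0,1\}^\Nb$ only guarantees that $S$ and $S_{k_\ell}$ agree on a finite initial segment $\{1,\dots,M_\ell\}$; beyond $M_\ell$ they may disagree on an infinite set $A_\ell$, and the operator $\sum_{i \in A_\ell} u_i\otimes u_i$, while positive and dominated by $I$, need not have small norm and need not be compact. (An infinite subfamily of the $u_i$ can sum to a non-compact operator of norm $1$: e.g.\ take $j$ copies of $j^{-1/2}e_j$ for each $j$, so that $\sum u_i\otimes u_i = I$ and $\|u_i\|\to 0$; any subset containing all copies of $j^{-1/2}e_j$ for infinitely many $j$ sums to a diagonal operator with infinitely many eigenvalues equal to $1$.) So your closing assertion that the tail-disagreement operator ``is itself a norm limit of finite rank operators, hence compact'' is unjustified, and no diagonal refinement of the subsequence repairs it: your estimate $O(\epsilon_{k_\ell}^{1/8})$ applies to $T_{k_\ell}$, not to $S\cap(N_{k_\ell},\infty)$, and you have no control whatsoever over how $S$ behaves past $M_\ell$. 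Consequently the decomposition $R = R_\ell' + R_\ell''$ never produces $\|R_\ell''\|$ small, which is what compactness of $R$ requires.

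The fix is to avoid cluster points entirely and work blockwise, which is what the paper does: partition $\Nb$ into \emph{finite} sets $S_0, S_1, S_2, \ldots$ with $\|u_i\|^2 \leq 2^{-n}$ for all $i \in S_n$ (possible since $\|u_i\| \to 0$), apply the finite-dimensional Theorem \ref{main} to each block separately --- its hypothesis is only $\sum_{i \in S_n} u_iu_i^* \leq I$, so each block qualifies with $\epsilon = 2^{-n}$ --- obtaining $S_n' \subseteq S_n$ with blockwise error $O(2^{-n/8})$, and set $S = \bigcup_n S_n'$. The error contributed by the blocks beyond the $N$th has norm at most $\sum_{n > N} O(2^{-n/8}) \to 0$, so $R$ is a norm limit of finite-rank operators. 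The observation you are missing is that the approximation can be carried out on each finite block independently, with errors forming a convergent series; once you instead approximate entire infinite tails at once, you are forced into the norm-uncontrollable reconciliation between $S$ and the $T_{k_\ell}$ described above.
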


\begin{proof}
Partition $\Nb$ into a sequence of finite sets $S_0, S_1, S_2, \ldots$ with
the property that $\|u_i\|^2 \leq 2^{-n}$ for all $i \in S_n$. Then for each
$n$ apply Theorem \ref{main} to find
a subset $S_n' \subseteq S_n$ such that $\|\sum_{i \in S_n'}u_i\otimes u_i
- \sum_{i \in S_n} t_iu_i\otimes u_i\| = O(2^{-n/8})$.

Let $S = \bigcup_{n=0}^\infty S_n'$. Given $N \in \Nb$, define
$\delta_N = \sum_{n=N+1}^\infty 2^{-n/8}$; then
$$\left\|\sum_{n=N+1}^\infty\left(\sum_{i \in S_n'}u_i\otimes u_i
- \sum_{i \in S_n} t_iu_i\otimes u_i\right)\right\| = O(\delta_N).$$
Since $\sum 2^{-n/8} < \infty$, this goes to zero as $N \to \infty$.
Thus we have shown that by subtracting the finite rank operator
$$\sum_{n=0}^N\left(\sum_{i \in S_n'}u_i\otimes u_i
- \sum_{i \in S_n} t_iu_i\otimes u_i\right),$$
the operator
$\sum_{i \in S} u_i\otimes u_i - \sum_{i=1}^\infty t_iu_i\otimes u_i$
can be made to have arbitrarily small norm. Therefore it is compact.
\end{proof}

This last result can be more elegantly expressed in terms of projections
in the Calkin algebra. We explain this alternative formulation in the
final section of the paper.

\section{Projection formulation}

In \cite{AA} the sorts of problems we have been discussing were
formulated in terms of projection matrices. Let $E$ be a linear subspace
of $\Cb^m$ and let $P: \Cb^m \to E$ be the orthogonal projection of $\Cb^m$
onto $E$. Then the vectors $u_i = Pe_i$, where $(e_i)$ is the standard basis
of $\Cb^m$, satisfy $\sum u_iu_i^* = P\sum e_ie_i^* P = P$. Moreover, the
diagonal entries of $P$ are the values $\langle Pe_i,e_i\rangle = \|Pe_i\|^2
= \|u_i\|^2$. So Theorem \ref{mssthm} can be interpreted in these terms.

Let $Q_i = e_ie_i^*$ be the matrix with a 1 in the $(i,i)$ entry and 0's
elsewhere. Then $u_iu_i^* = PQ_iP$. The sum $\sum_{i \in S} u_iu_i^*$
over a selected set of indices is therefore a matrix of the form $PQP$
where $Q$ is a {\it diagonal projection}, a diagonal matrix whose
entries are all either $0$ or $1$. Thus, Theorem \ref{mssthm} can be
expressed in the following form:

\begin{theo}\label{amssthm}
Let $P \in M_m(\Cb)$ be an $m \times m$ complex projection matrix whose
diagonal entries are each at most $\epsilon$. Then there exists a diagonal
projection $Q \in M_m(\Cb)$ such that
$$\|PQP\|, \|P(I-Q)P\| \leq \frac{1}{2} + \sqrt{2\epsilon} + \epsilon.$$
\end{theo}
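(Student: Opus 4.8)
The plan is to observe that Theorem~\ref{amssthm} is nothing more than a restatement of Theorem~\ref{mssthm} via the dictionary already set up in this section, so the proof is purely a matter of unwinding notation. Concretely, given a projection matrix $P \in M_m(\Cb)$ with diagonal entries at most $\epsilon$, I would set $u_i = Pe_i$ for $1 \leq i \leq m$, where $(e_i)$ is the standard basis of $\Cb^m$. As noted just before the statement, $\sum_i u_iu_i^* = P(\sum_i e_ie_i^*)P = P \cdot I \cdot P = P$, and $\|u_i\|^2 = \langle Pe_i, e_i\rangle$ is the $(i,i)$ diagonal entry of $P$, hence at most $\epsilon$.

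The one genuine gap between the two statements is that Theorem~\ref{mssthm} requires $\sum_i u_iu_i^* = I$, whereas here $\sum_i u_iu_i^* = P$, which is only the identity on the range of $P$. So I would first reduce to the case $P = I_d$ by passing to the subspace $E = \mathrm{ran}(P) \cong \Cb^d$ where $d = \mathrm{rank}(P)$: the vectors $u_i$ lie in $E$ and satisfy $\sum_i u_iu_i^* = I_E$ when regarded as operators on $E$. Applying Theorem~\ref{mssthm} in $M_d(\Cb)$ yields a partition $\{1,\dots,m\} = S_1 \cup S_2$ with $\|\sum_{i \in S_j} u_iu_i^*\|_{M_d} \leq \frac{1}{2} + \sqrt{2\epsilon} + \epsilon$ for $j = 1,2$. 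Now let $Q$ be the diagonal projection with $(i,i)$ entry equal to $1$ exactly when $i \in S_1$. Then $PQP = P(\sum_{i \in S_1} e_ie_i^*)P = \sum_{i \in S_1} u_iu_i^*$ and similarly $P(I-Q)P = \sum_{i \in S_2} u_iu_i^*$. Since the operator norm of a positive operator supported on $E$ is the same whether computed in $M_d(\Cb)$ or in $M_m(\Cb)$, the two norm bounds transfer verbatim.

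I do not anticipate any real obstacle; the only point requiring a moment's care is the norm identification between $M_d(\Cb)$ and $M_m(\Cb)$, which is immediate because $PQP$ and $P(I-Q)P$ annihilate $E^\perp$ and map $E$ into $E$, so their norms equal the norms of their restrictions to $E$. If one prefers to avoid the explicit passage to $E$, an alternative is to invoke the infinite-dimensional or rank-deficient generalizations mentioned in the text, but the subspace reduction is cleaner and self-contained. In either case the proof is two or three lines.

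\begin{proof}
Let $(e_i)$ be the standard basis of $\Cb^m$ and put $u_i = Pe_i$. These
vectors lie in the range $E$ of $P$, and, regarded as operators on
$E \cong \Cb^d$ where $d = \mathrm{rank}(P)$, they satisfy
$\sum_i u_iu_i^* = P\bigl(\sum_i e_ie_i^*\bigr)P = P$, which is the identity
operator on $E$. Moreover $\|u_i\|^2 = \langle Pe_i,e_i\rangle$ is the
$(i,i)$ entry of $P$, hence $\|u_i\|^2 \leq \epsilon$. By Theorem
\ref{mssthm} applied in $M_d(\Cb)$ there is a partition
$\{1,\dots,m\} = S_1 \cup S_2$ with
$\|\sum_{i \in S_j} u_iu_i^*\| \leq \frac{1}{2} + \sqrt{2\epsilon} +
\epsilon$ for $j \in \{1,2\}$. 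Let $Q \in M_m(\Cb)$ be the diagonal
projection whose $(i,i)$ entry is $1$ precisely when $i \in S_1$. Then
$PQP = P\bigl(\sum_{i \in S_1} e_ie_i^*\bigr)P = \sum_{i \in S_1} u_iu_i^*$
and likewise $P(I-Q)P = \sum_{i \in S_2} u_iu_i^*$. Since each of these
operators maps $E$ into $E$ and annihilates $E^\perp$, its norm in
$M_m(\Cb)$ equals the norm of its restriction to $E$, so
$\|PQP\|, \|P(I-Q)P\| \leq \frac{1}{2} + \sqrt{2\epsilon} + \epsilon$.
\end{proof}
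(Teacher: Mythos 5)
Your proof is correct and follows essentially the same route as the paper, which obtains Theorem~\ref{amssthm} from Theorem~\ref{mssthm} precisely by the dictionary $u_i = Pe_i$, $u_iu_i^* = Pe_ie_i^*P$, and the observation that the $u_i$ form a tight frame for the range of $P$. The only detail you make explicit that the paper leaves implicit is the restriction to $E=\mathrm{ran}(P)$ so that $\sum_i u_iu_i^*$ becomes the identity there, and you handle it correctly.
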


As in the introduction, we can express this pair of inequalities by the
single statement that $\|PQP - \frac{1}{2}P\| \leq
\frac{1}{2} + \sqrt{2\epsilon} + \epsilon$.

The projection formulation is actually equivalent to the vector
formulation. This is because the condition $\sum u_iu_i^* = I$ says that
the vectors $u_i$ constitute a {\it tight frame}, and according to a
standard construction, every tight frame in $\Cb^d$ arises by orthogonally
projecting into $\Cb^d$ an orthonormal basis of some containing space.
See (\cite{W}, p.\ 230). Thus, we can pass back and forth between results
about frames in $\Cb^d$ and results about projection matrices in $\Cb^m$.
We will now summarize the results of Sections 1 and 2 in the language
of projection matrices.

The projection formulation of Lemma \ref{lem1} is a quantitative
version of (\cite{AA}, Theorem 7.12).

\begin{lemma}\label{alem1}
Let $P \in M_m(\Cb)$ be an $m \times m$ complex projection matrix whose
diagonal entries are each at most $\epsilon$. Also let $r \in \Nb$ and
suppose $t_1, \ldots, t_r > 0$ satisfy $\sum_j t_j = 1$.
Then there exist $r$ diagonal projections $Q_1, \ldots, Q_r$ in $M_m(\Cb)$
such that $\sum_{j=1}^r Q_j = I$ and
$$\|PQ_jP\| \leq t_j(1 + \sqrt{r\epsilon})^2$$
for $1 \leq j \leq r$.
\end{lemma}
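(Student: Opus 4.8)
The plan is to simply translate Lemma \ref{lem1} into the projection language using the dictionary already established in this section. Recall that every tight frame in $\Cb^d$ arises by orthogonally projecting an orthonormal basis of a containing space, and conversely; under this correspondence, if $P \in M_m(\Cb)$ is the orthogonal projection onto a subspace $E \subseteq \Cb^m$, then $u_i = Pe_i$ satisfies $\sum_i u_iu_i^* = P$ (which plays the role of the identity on $E$), $\|u_i\|^2$ equals the $i$th diagonal entry of $P$, and for $S \subseteq \{1,\ldots,m\}$ we have $\sum_{i \in S} u_iu_i^* = PQP$ where $Q = \sum_{i \in S} e_ie_i^*$ is the diagonal projection supported on $S$.

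First I would invoke Lemma \ref{lem1} with $d = \mathrm{rank}(P)$, the vectors $u_i = Pe_i$, and the same $r$ and $t_1,\ldots,t_r$: since $\sum_i u_iu_i^* = P$ acts as the identity on $E = \mathrm{ran}(P)$ and $\|u_i\|^2 \leq \epsilon$, Lemma \ref{lem1} (applied inside $E$) produces a partition of $\{1,\ldots,m\}$ into $S_1,\ldots,S_r$ with $\|\sum_{i \in S_j} u_iu_i^*\| \leq t_j(1+\sqrt{r\epsilon})^2$ for each $j$. Then I would set $Q_j = \sum_{i \in S_j} e_ie_i^*$; these are diagonal projections summing to $I$ (because the $S_j$ partition $\{1,\ldots,m\}$), and $PQ_jP = \sum_{i \in S_j} Pe_ie_i^*P = \sum_{i \in S_j} u_iu_i^*$, so the norm bound transfers immediately.

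One small bookkeeping point worth addressing: Lemma \ref{lem1} is stated for vectors in $\Cb^d$ with $\sum u_iu_i^* = I_d$, whereas here $\sum u_iu_i^* = P$ only acts as the identity on $E$. This is harmless — one identifies $E$ with $\Cb^d$ via a unitary, applies Lemma \ref{lem1} there, and transports back — and the operator norm of $PQ_jP$ (viewed in $M_m(\Cb)$) coincides with its norm as an operator on $E$ since its range lies in $E$. So there is really no obstacle here; the content of the statement is entirely in Lemma \ref{lem1}, and this lemma is just its restatement under the frame/projection equivalence. The only thing to be careful about is to confirm that the $Q_j$ genuinely sum to the full identity $I_m$ and not merely to $P$, which follows because we partition the whole index set $\{1,\ldots,m\}$ rather than a subset.
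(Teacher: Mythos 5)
Your proposal is correct and is exactly the route the paper takes: Section 3 states Lemma \ref{alem1} as the direct translation of Lemma \ref{lem1} under the tight-frame/projection dictionary ($u_i = Pe_i$, $Q_j = \sum_{i \in S_j} e_ie_i^*$, $PQ_jP = \sum_{i \in S_j} u_iu_i^*$), and you have simply written out the details the paper leaves implicit. The bookkeeping points you flag (identifying $\mathrm{ran}(P)$ with $\Cb^d$, and the $Q_j$ summing to $I_m$ because the $S_j$ partition the full index set) are handled correctly.
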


As with Lemma \ref{lem1}, we can infer that
$\|PQ_jP  - t_j'P\| \leq \sqrt{r\epsilon} + \frac{1}{2}r\epsilon$
for $1 \leq j \leq r$.

In the projection version of Lemma \ref{lem2} the condition
$\sum u_iu_i^* \leq I$ corresponds to working under a diagonal projection
in the containing space:

\begin{lemma}\label{alem2}
Let $P \in M_m(\Cb)$ be an $m \times m$ complex projection matrix, let
$Q \in M_m(\Cb)$ be a diagonal projection, and let $0 \leq t \leq 1$. Assume
that the diagonal entries of $QPQ$ are each at most $\epsilon$, i.e.,
$\langle Pe_i,e_i\rangle \leq \epsilon$ for each basis vector $e_i$
such that $Qe_i = e_i$. Then there is a diagonal
projection $Q' \leq Q$ such that
$$\|PQ'P - tPQP\| = O(\epsilon^{1/4}).$$
\end{lemma}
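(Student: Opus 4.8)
The plan is to deduce Lemma~\ref{alem2} directly from Lemma~\ref{lem2}, using the correspondence between projection matrices and tight frames set up above. First I would put $u_i = Pe_i$ for $1 \le i \le m$ and let $T = \{i : Qe_i = e_i\}$ be the set of indices at which $Q$ carries a $1$ on the diagonal. For $i \in T$ we have $\|u_i\|^2 = \langle Pe_i,e_i\rangle = \langle QPQe_i,e_i\rangle \le \epsilon$ by hypothesis, and
$$\sum_{i \in T} u_iu_i^* = P\left(\sum_{i \in T} e_ie_i^*\right)P = PQP \le P \le I.$$
So the finitely many vectors $(u_i)_{i \in T}$, regarded as column vectors in $\Cb^m$, satisfy the hypotheses of Lemma~\ref{lem2} with $B = PQP$.

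Applying Lemma~\ref{lem2} then yields a set $S \subseteq T$ with $\|\sum_{i \in S} u_iu_i^* - tPQP\| = O(\epsilon^{1/4})$. Setting $Q' = \sum_{i \in S} e_ie_i^*$, this is a diagonal projection with $Q' \le Q$ (precisely because $S \subseteq T$), and $\sum_{i \in S} u_iu_i^* = P(\sum_{i \in S} e_ie_i^*)P = PQ'P$, so the estimate becomes $\|PQ'P - tPQP\| = O(\epsilon^{1/4})$, which is the desired conclusion.

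I do not expect a genuine obstacle here; this is a mechanical translation of Lemma~\ref{lem2} through the frame--projection dictionary. The only point deserving a moment's care is that operator norms be computed consistently, but this is a non-issue: the matrices $PQ'P$ and $PQP$ appearing in the conclusion are literally equal, as elements of $M_m(\Cb)$, to the sums $\sum_{i\in S}u_iu_i^*$ and $\sum_{i\in T}u_iu_i^*$ on which Lemma~\ref{lem2} operated. (Equivalently, the whole argument may be run inside the range $E$ of $P$, viewing the $u_i$ as vectors of $E$; every operator involved annihilates $E^\perp$, so the norms coincide.) The projection reformulations of Lemma~\ref{lem1}, Corollary~\ref{lemcor}, and Theorem~\ref{main} are obtained identically, via the substitutions $u_i = Pe_i$ and $Q = \sum_i e_ie_i^*$.
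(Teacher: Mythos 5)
Your proof is correct and is exactly the translation the paper intends: the paper gives no separate proof of Lemma~\ref{alem2}, instead relying on the frame--projection dictionary ($u_i = Pe_i$, diagonal projections corresponding to index sets) described at the start of Section 3, which is precisely what you carry out, applying Lemma~\ref{lem2} with $B = PQP$ to the vectors indexed by $T = \{i : Qe_i = e_i\}$. No gaps.
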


Our main result takes the following form.

\begin{theo}\label{amain}
Let $P \in M_m(\Cb)$ be an $m \times m$ complex projection matrix,
let $Q \in M_m(\Cb)$ be a diagonal projection, and let $B \in M_m(\Cb)$ be
a diagonal matrix satisfying $0 \leq B \leq Q$.
Assume that the diagonal entries of $QPQ$ are each at most $\epsilon$.
Then there is a diagonal projection $Q' \leq Q$ such that
$$\|PQ'P - PBP\| = O(\epsilon^{1/8}).$$
\end{theo}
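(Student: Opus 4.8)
The plan is to translate Theorem \ref{main} directly into the projection language, exactly as the preceding lemmas in this section were obtained from their vector counterparts. First I would invoke the standard tight frame construction (\cite{W}, p.\ 230): since $QPQ$, viewed as an operator on the range of $Q$, is a positive contraction whose diagonal entries are at most $\epsilon$, there is a Hilbert space containing $\mathrm{ran}(Q)$ and an orthonormal basis of it which, when orthogonally projected onto $\mathrm{ran}(Q)$, yields vectors $u_i$ indexed by $\{i : Qe_i = e_i\}$. Concretely, set $u_i = Pe_i$ for exactly those $i$ with $Qe_i = e_i$; then $\sum_i u_iu_i^* = QPQ \leq I$ (as an operator on $\mathrm{ran}(Q)$, or on all of $\Cb^m$ if we extend by zero), and $\|u_i\|^2 = \langle Pe_i, e_i\rangle \leq \epsilon$ by hypothesis.

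Next, writing $B = \mathrm{diag}(t_i)$ with $0 \leq t_i \leq 1$ (and $t_i = 0$ whenever $Qe_i = 0$, since $0 \leq B \leq Q$ forces the $i$th diagonal entry of $B$ to vanish there), I would apply Theorem \ref{main} to the family $(u_i)$ with these coefficients $t_i$. This produces a subset $S$ of $\{i : Qe_i = e_i\}$ such that
$$\left\|\sum_{i \in S} u_iu_i^* - \sum_i t_i u_iu_i^*\right\| = O(\epsilon^{1/8}).$$
Now let $Q'$ be the diagonal projection with $Q'e_i = e_i$ precisely for $i \in S$; then $Q' \leq Q$ by construction. The first term is $\sum_{i \in S} Pe_ie_i^*P = PQ'P$, and the second is $\sum_i t_i Pe_ie_i^*P = P\left(\sum_i t_i e_ie_i^*\right)P = PBP$, since $\sum_i t_i e_ie_i^* = B$. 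Hence $\|PQ'P - PBP\| = O(\epsilon^{1/8})$, as required.

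I expect no genuine obstacle here — this statement is a routine repackaging rather than a new argument. The only point needing a moment's care is bookkeeping around the diagonal projection $Q$: one must check that restricting attention to the indices $i$ with $Qe_i = e_i$ is harmless, i.e.\ that $PBP$ and $PQ'P$ really are unaffected by the other coordinates. This follows because $0 \leq B \leq Q$ implies $e_i^*Be_i = 0$ for $i$ with $Qe_i = 0$, and any diagonal projection $Q' \leq Q$ necessarily has $Q'e_i = 0$ there too, so both sums range only over the relevant index set. One should also confirm that the constant hidden in $O(\epsilon^{1/8})$ is the same absolute constant as in Theorem \ref{main}, which it is, since that theorem's bound depends on nothing but $\epsilon$. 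With these remarks the proof is complete.
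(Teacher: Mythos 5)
Your proposal is correct and follows the route the paper intends: Theorem \ref{amain} is the translation of Theorem \ref{main} obtained by setting $u_i = Pe_i$ for the indices $i$ with $Qe_i = e_i$, exactly as you do. One slip worth fixing: $\sum_i u_iu_i^*$ over those indices equals $PQP$, not $QPQ$ (the vectors $u_i$ lie in the range of $P$, not of $Q$); fortunately the only facts you actually use --- $\sum_i u_iu_i^* = PQP \leq I$ and $\|u_i\|^2 = \langle Pe_i, e_i\rangle \leq \epsilon$ --- remain valid, so the application of Theorem \ref{main} and the identifications $\sum_{i\in S}u_iu_i^* = PQ'P$, $\sum_i t_iu_iu_i^* = PBP$ go through unchanged. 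Note also that the tight-frame completion from \cite{W} is only needed for the converse deduction (of Theorem \ref{main} from Theorem \ref{amain}, as the paper's parenthetical remark explains); for the direction you are proving, the direct substitution $u_i = Pe_i$ suffices and no containing space needs to be constructed.
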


(In order to deduce Theorem \ref{main} from Theorem \ref{amain}, we need
to know that any finite set of vectors satisfying $B = \sum_i u_iu_i^* \leq I$
is contained in a tight frame, so that it arises from projecting into
$\Cb^d$ some subset of the standard basis of a containing space.
To see this, write the positive matrix
$I - B$ as a sum of positive rank one matrices and use the fact that any
positive rank one matrix has the form $uu^*$ for some vector $u$.)

The projection version of Theorem \ref{inf} is unsurprising.

\begin{theo}\label{ainf}
Let $P \in \Bc(l^2)$ be a projection whose diagonal entries are each
at most $\epsilon$. Then there exists a diagonal projection $Q$ such
that
$$\|PQP\|, \|P(I-Q)P\| \leq \frac{1}{2} + \sqrt{2\epsilon} + \epsilon.$$
\end{theo}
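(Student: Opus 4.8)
The plan is to reduce Theorem \ref{ainf} to the finite-dimensional Theorem \ref{amssthm} by a standard exhaustion-plus-compactness argument, exactly parallel to the sketch given for Theorem \ref{inf} in Section 3 but carried out in the projection language. Fix an increasing sequence of finite-rank coordinate projections $R_1 \leq R_2 \leq \cdots$ in $\Bc(l^2)$ with $R_n \to I$ strongly, say $R_n$ the projection onto the span of $e_1, \ldots, e_{k_n}$. For each $n$ consider the compression $P_n = R_n P R_n$, viewed as an operator on the finite-dimensional space $R_n l^2 \cong \Cb^{k_n}$. The diagonal entries of $P_n$ agree with the corresponding diagonal entries of $P$, hence are each at most $\epsilon$. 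The one subtlety is that $P_n$ is generally not a projection, only a positive contraction, so Theorem \ref{amssthm} does not apply to it directly.

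First I would handle this by dilating $P_n$ back to a genuine projection in a slightly larger finite-dimensional space. Since $0 \leq P_n \leq I$ on $\Cb^{k_n}$, the operator
$$\tilde P_n = \begin{pmatrix} P_n & (P_n - P_n^2)^{1/2}\\ (P_n - P_n^2)^{1/2} & I - P_n\end{pmatrix}$$
on $\Cb^{k_n}\oplus\Cb^{k_n}$ is a projection, and its first $k_n$ diagonal entries are those of $P_n$ (each $\leq \epsilon$) while the remaining $k_n$ diagonal entries are $1 - \langle P e_i, e_i\rangle$, which need not be small. To fix this I would instead use the cleaner observation that $P$ restricted to the subspace $R_n l^2$ together with $P$ restricted to a suitable complement behaves like a frame; more directly, apply the reduction already described for Theorem \ref{inf}: choose $n_k$ with $\sum_{i \leq n_k} u_i \otimes u_i \geq (1 - 1/k)I$ where $u_i = Pe_i$, rescale by $B_k^{-1/2}$ as in Lemma \ref{lem2}/Theorem \ref{amain}'s parenthetical remark to land exactly in the hypotheses of Theorem \ref{amssthm}, obtain a diagonal $0$-$1$ pattern on $\{1, \ldots, n_k\}$, and extend it arbitrarily to all of $\Nb$, giving a point $q^{(k)} \in \{0,1\}^\Nb$. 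The rescaling introduces an $O(\epsilon^{1/4})$-type error on the spectral subspace where $B_k$ is small, but that subspace shrinks to zero as $k \to \infty$, so in the limit the bound $\frac12 + \sqrt{2\epsilon} + \epsilon$ is recovered with no loss.

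Next, by compactness of $\{0,1\}^\Nb$ in the product topology, pass to a subnet (or subsequence) $q^{(k)} \to q$, and let $Q$ be the diagonal projection with diagonal $q$. The final step is to verify that $Q$ works, i.e. $\|PQP\|, \|P(I-Q)P\| \leq \frac12 + \sqrt{2\epsilon} + \epsilon$. Here I would argue that for any fixed finite-rank coordinate projection $R$, the compression $RPQPR = RPQ_RPR + (\text{error})$ where $Q_R$ is $Q$ restricted to the coordinates in the range of $R$; since convergence $q^{(k)} \to q$ is coordinatewise, for $k$ large the relevant finitely many coordinates of $q^{(k)}$ match those of $q$, so $\|RPQPR\| \leq \limsup_k \|R P q^{(k)} P R\| \leq \frac12 + \sqrt{2\epsilon} + \epsilon$ up to the vanishing rescaling error. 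Letting $R \to I$ strongly and using that $\|PQP\| = \sup_R \|RPQPR\|$ for the increasing net $R$ (operator norm is the supremum of norms of compressions to an increasing sequence of finite-rank projections tending strongly to $I$, since $PQP \geq 0$), we conclude. The same argument applied to $q^{(k)}$'s complement, or observing $I - Q$ is the diagonal projection for the limit of the complementary sequences, gives the bound for $P(I-Q)P$.

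The main obstacle is bookkeeping rather than conceptual: making sure the rescaling step does not degrade the constant in the limit, and correctly justifying that the norm of the infinite-dimensional operator $PQP$ is captured as the supremum over compressions by the chosen increasing sequence of finite coordinate projections (this uses positivity of $PQP$ together with strong convergence $R_n \to I$, via $\langle PQP\xi, \xi\rangle = \lim_n \langle PQP R_n\xi, R_n\xi\rangle$ is false in general but $\sup_n \langle R_nPQPR_n \xi,\xi\rangle \geq$ enough — more carefully, one uses that for $\xi$ in the dense union of the $R_n l^2$ the full norm is already seen). Since the paper explicitly says "the proof of Theorem \ref{inf} is fairly straightforward" and that "the projection version of Theorem \ref{inf} is unsurprising," I would keep this terse, citing the argument given for Theorem \ref{inf} and the equivalence of the frame and projection formulations rather than repeating details.
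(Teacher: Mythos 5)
There is a genuine gap: your reduction only covers the case where $P$ has finite rank. In the frame picture the vectors $u_i = Pe_i$ satisfy $\sum_i u_i\otimes u_i = P$, i.e.\ they form a tight frame for the range $PH$ of $P$, and the condition you want, $B_k = \sum_{i\le n_k} u_i\otimes u_i \ge (1-\tfrac1k)I$ (with $I$ the identity on $PH$), forces $B_k$ to be invertible on $PH$. Since $B_k$ has rank at most $n_k$, no finite $n_k$ can work when $\mathrm{rank}\,P = \infty$ --- which is the main case of interest. This is exactly why the paper's sketch of Theorem \ref{inf} has \emph{two} stages: first, infinitely many vectors in a \emph{finite-dimensional} space (where the $B_k \ge (1-\tfrac1k)I$ trick is legitimate, since there $B_k \uparrow I$ in a finite-dimensional algebra), and second, a genuinely infinite-dimensional range, handled by compressing the frame vectors onto an increasing sequence of finite-dimensional subspaces $F_n \subseteq PH$ (note $\sum_i (P_{F_n}u_i)\otimes(P_{F_n}u_i) = P_{F_n}PP_{F_n} = P_{F_n}$ and $\|P_{F_n}u_i\|^2 \le \epsilon$, so stage one applies to each $F_n$), followed by a second cluster-point argument as $F_n \uparrow PH$. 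You have collapsed these into one stage. Your compressions $R_nPR_n$ act on the domain coordinates of $l^2$ and, as you correctly observe, do not produce projections; the compression that is actually needed acts on the range side, inside $PH$.

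Two smaller points. First, in the legitimate finite-dimensional-range stage there is no ``spectral subspace where $B_k$ is small'': by construction $B_k \ge (1-\tfrac1k)I$ everywhere, so $B_k^{-1/2}$ is globally bounded by $(1-\tfrac1k)^{-1/2}$ and the only loss is $\epsilon \mapsto (1-\tfrac1k)^{-1}\epsilon$, which indeed vanishes in the limit; the $O(\epsilon^{1/4})$ language imported from Lemma \ref{lem2} does not belong here. Second, in verifying the bound for the cluster point $Q$ you should compare the increasing partial sums $\sum_{i\in S,\, i\le N} u_i\otimes u_i$ (which converge strongly upward to $PQP$, so their norms have supremum $\|PQP\|$) with the corresponding partial sums for $q^{(k)}$, rather than with $RPQ^{(k)}PR$: the full operator $PQ^{(k)}P$ is not controlled, because the extension of the $0$--$1$ pattern beyond index $n_k$ was arbitrary. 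With the missing range-side compression stage inserted and these adjustments made, the argument becomes the paper's.
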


A similar finite- to infinite-dimensional inference was proven in
(\cite{AA}, Proposition 7.6), and our proof of Theorem \ref{inf} is
essentially a translation of the argument given there. Again, generalizing
Theorem \ref{ainf} to the nonseparable setting is straightforward.

Finally, as we mentioned at the end of Section 2, Theorem \ref{compact}
has an attractive formulation in terms of projections. Let
$\pi: \Bc(l^2) \to \Cc(l^2) = \Bc(l^2)/\Kc(l^2)$ be the natural projection
of $\Bc(l^2)$ onto the Calkin algebra, let $\Phi: \Bc(l^2) \to l^\infty$
be the conditional expectation of $\Bc(l^2)$ onto its diagonal subalgebra,
and let $\tilde{\Phi}: \Cc(l^2) \to l^\infty/c_0$ be the corresponding
conditional expectation of $\Cc(l^2)$ onto its diagonal subalgebra.

\begin{theo}\label{calkin}
Let $p \in \Cc(l^2)$ be a projection such that $\tilde{\Phi}(p) = 0$, let
$q \in \Cc(l^2)$ be a diagonal projection, and let $b \in \Cc(l^2)$ be a
diagonal operator satisfying $0 \leq b \leq q$. Then there is a diagonal
projection $q' \leq q$ such that $pq'p = pbp$.
\end{theo}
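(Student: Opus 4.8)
The plan is to lift everything from the Calkin algebra $\Cc(l^2)$ back to $\Bc(l^2)$, apply the approximate compact-level result of Theorem \ref{compact}, and push forward again. First I would choose a projection $P \in \Bc(l^2)$ with $\pi(P) = p$; since $\tilde\Phi(p) = 0$, the diagonal $\Phi(P)$ of $P$ lies in $c_0$, meaning its diagonal entries tend to $0$. Writing $P e_i = u_i$ for the standard basis $(e_i)$ of $l^2$, the identity $\sum u_i\otimes u_i = P$ holds (in the strong sense) and $\|u_i\|^2 = \langle Pe_i, e_i\rangle \to 0$, so the hypotheses of Theorem \ref{compact} are met up to the constraint $\sum u_i\otimes u_i \le I$, which holds because $P$ is a projection. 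Next I would encode the data $q$ and $b$: lift $q$ to a diagonal projection $Q \in \Bc(l^2)$ (possible since diagonal projections in $l^\infty/c_0$ lift to diagonal projections in $l^\infty$), and lift $b$ to a diagonal operator $B \in \Bc(l^2)$ with $0 \le B \le Q$; concretely, if $b$ is represented by a bounded sequence $(b_i)$ with $0 \le b_i \le q_i$, set $t_i = b_i$ and note $\sum t_i\, u_i\otimes u_i = PBP$ only up to the support of $Q$, which is the relevant comparison since $b \le q$ forces us to restrict to indices with $q_i = 1$.

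The heart of the argument is then Theorem \ref{compact} applied to the restricted family $(u_i)_{i : Q e_i = e_i}$ with coefficients $(t_i)$: it yields a set of indices $S$, contained in the support of $Q$, such that the operator
$$\sum_{i \in S} u_i\otimes u_i \;-\; \sum_{i : Qe_i = e_i} t_i\, u_i\otimes u_i \;=\; P Q' P \;-\; P B P$$
is compact, where $Q'$ is the diagonal projection onto $\operatorname{span}\{e_i : i \in S\}$. Since $S$ lies in the support of $Q$ we have $Q' \le Q$, and passing to the Calkin algebra via $\pi$ kills the compact operator, so $\pi(P Q' P) = \pi(P B P)$, i.e. $p q' p = p b p$ where $q' = \pi(Q')$ and we have used $\pi(P) = p$, $\pi(B) = b$ on the relevant corner. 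A small point to check: $\pi(PBP) = pbp$ even though $B$ was only a lift of $b$ on the support of $Q$ — but $b \le q$ means $b = qbq$, so $pbp = (pqp) b (pqp)$ and the off-support coordinates of $B$ are irrelevant modulo the choices made.

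The main obstacle I anticipate is bookkeeping rather than conceptual: making sure the lifts $P$, $Q$, $B$ are chosen \emph{compatibly} so that $Q' \le Q$ translates cleanly to $q' \le q$, and that the partial comparison $0 \le B \le Q$ (as opposed to $0 \le B \le I$) is exactly what Theorem \ref{compact} consumes — Theorem \ref{compact} is stated with coefficients $t_i \in [0,1]$ and $\sum u_i \otimes u_i \le I$, with no mention of a sub-projection $q$, so one must restrict attention to the sub-family indexed by the support of $Q$ and verify that $\sum_{i: Qe_i = e_i} u_i \otimes u_i = PQP \le P \le I$ still satisfies the hypotheses there. The other routine verification is that diagonal projections and diagonal positive contractions in $l^\infty/c_0$ lift to the same in $l^\infty$, which is immediate since the diagonal subalgebra quotient $l^\infty \to l^\infty/c_0$ admits such lifts coordinatewise. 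Everything else is a direct translation through $\pi$ and the frame-to-projection dictionary already set up in this section.
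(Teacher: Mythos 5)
Your proposal is correct and follows exactly the route the paper intends: Theorem \ref{calkin} is presented there as a reformulation of Theorem \ref{compact}, obtained by lifting $p$, $q$, $b$ to $P$, $Q$, $B$ in $\Bc(l^2)$ (with $\Phi(P)\in c_0$ coming from $\tilde\Phi(p)=0$), applying Theorem \ref{compact} to the subfamily $u_i=Pe_i$ indexed by the support of $Q$, and passing back through $\pi$. Your attention to restricting to the support of $Q$ so that $Q'\leq Q$, and to choosing a representative of $b$ with $0\leq b_i\leq q_i$, covers the only bookkeeping points that need care.
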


Theorem \ref{calkin} is a theorem of Lyapunov type. Letting $\Dc$ be the
diagonal subalgebra of $\Cc(l^2)$, we have that the set of diagonal
operators satisfying $0 \leq b \leq q$ is the positive part of the unit
ball of $q\Dc$. Theorem \ref{calkin} says that the image of any element
of this set under the map $b \mapsto pbp$ equals the image of an
extreme point.

%##
\bigskip
\bigskip


\begin{thebibliography}{aaaaaaaa}

\bibitem{AA}
C.\ Akemann and J.\ Anderson, Lyapunov theorems for operator algebras,
{\it Mem.\ Amer.\ Math.\ Soc.\ \bf 94} (1991).

\bibitem{AA2}
{---------}, The continuou Beck-Fiala theorem is optimal,
{\it Discrete Math.\ \bf 146} (1995), 1-9.

\bibitem{B}
T.\ Bice, Filters on C*-algebras, to appear in {\it Canad.\ J.\ Math.}

\bibitem{CC}
P.\ Casazza and J.\ Tremain, The Kadison-Singer problem in mathematics
and engineering, {\it Proc.\ Natl.\ Acad.\ Sci.\ USA \bf 103} (2006),
2032-2039.

\bibitem{HLS}
H.\ Hermes and J.\ P.\ LeSalle, {\it Functional Analysis and Time
Optimal Control}, Academit Press (1969).

\bibitem{KS}
R.\ Kadison and I.\ Singer, Extensions of pure states, {\it Amer.\ J.\
Math.\ \bf 81} (1959), 547-564.

\bibitem{kat}
T.\ Kato, {\it Perturbation Theory for Linear Operators}, Springer-Verlag
(1966).

\bibitem{lind}
J.\ Lindenstrauss, A short proof of Liapanov's convexity theorem,
{\it J.\ Math.\ and Mech.\ \bf 15} (1966), 971-972.

\bibitem{lyap}
A.\ Lyapunov, On completely additive vector functions, {\it Bull.\ Acad.\
Sci.\ USSR \bf 4} (1940), 465-478. (Russian)

\bibitem{MSS}
A.\ Marcus, D.\ Spielman, and N.\ Srivastava, Interlacing families II:
mixed characteristic polynomials and the Kadison-Singer problem,
arXiv:1306.3969.

\bibitem{W}
N.\ Weaver, The Kadison-Singer problem in discrepancy theory, {\it Discrete
Math.\ \bf 278} (2004), 227-239.

\end{thebibliography}
\end{document}